\theoremstyle{plain}
\newtheorem{theorem}{Theorem}[section]
\newtheorem{prop}[theorem]{Proposition}
\newtheorem{cor}{Corollary}[theorem]
\newtheorem{lemma}{Lemma}[section]
\theoremstyle{definition}
\newtheorem{definition}{Definition}[section]
\newtheorem{remark}{Remark}[section]
\newtheorem{example}{Example}[theorem]
\begin{document}
\title[Norm Inequalities]{Norm Inequalities for Inner Product Type Integral Transformers}
\author[ Benard Okelo ]{ Benard Okelo }

\newcommand{\acr}{\newline\indent}

\address{Benard Okelo\\ School of Mathematics and Actuarial Science, Jaramogi Oginga Odinga University of Science and Technology, Box 210-40601, Bondo, Kenya.}
\email{bnyaare@yahoo.com}

\thanks{ This work was partially supported by the DFG Grant No. 1603991000.}

\subjclass[2010]{Primary 46B20, Secondary 47L05}
\keywords{Norm Inequality, Unitarily invariant norm, Operator valued function,  Norm ideal,  Inner product type integral transformer}

\begin{abstract}
 In this paper, we give a detailed survey on norm inequalities for inner product type integral transformers. We first consider unitarily invariant norms and  operator valued functions. We then give results on norm inequalities for inner product type integral transformers in terms of Landau  inequality,   Gr\" uss  inequality. Lastly, we explore some of the applications in quantum theory.
\end{abstract}
\maketitle
\section{Introduction}
Let $\mathcal{H}$ be an infinite dimensional  complex Hilbert space and $\mathcal{B(H)}$ the algebra of all bounded linear operators on $\mathcal{H}.$
 In this paper, we discuss various types of norm inequalities for  inner product type integral transformers in terms of Landau type inequality,   Gr\" uss type inequality and Cauchy-Schwarz type inequality. We shall also consider  the applications in quantum theory. We begin by the following definition.
 \begin{definition}
Gr\"uss inequality, states that if $f$ and $g$ are integrable real functions
on $[a,b]$ such that $C\leq f(x)\le D$ and $E\leq g(x)\le F$ hold
for some  real constants $C,D,E,F$ and for all $x\in[a,b]$, then
\begin{equation}
  \left|\frac1{b-a}\int_a^bf(x)g(x)dx-\frac1{(b-a)^2}\int_a^b f(x)dx\int_a^b g(x)dx\right|
\leq\frac14(D-C)(F-E).\label{grisovaca}
\end{equation}
\end{definition}

 Inequality \ref{grisovaca} is very interesting to many researchers and it has beeen considered in many  studies whereby
conditions on functions are varied to give different estimates
(see \cite{JOC} and references therein). More on this inequality (and the classical one in \cite{joc09i}) are discussed in the sequel.

Next, we discuss a very important definition of inner product type integral (i.p.t.i) transformer which is key to our study.
\begin{definition}\label{def2}
Consider weakly$\mu^*$-measurable operator valued (o.v) functions $A, B:\Omega\rightarrow \mathcal{B(H)}$ and
for all $X\in \mathcal{B(H)}$ let the function $t\rightarrow A_t X B_t$ be  also
weakly$\mu^*$-measurable. If these functions are Gel'fand integrable
for all $X\in \mathcal{B(H)}$, then the inner product type  linear
transformation $X\to\int_\Omega A_t X B_t dt$ is be called an
inner product type integral (i.p.t.i) transformer on $\mathcal{B(H)}$ and denoted by
$\int_\Omega A_t \otimes B_t dt$ or ${\mathcal I}_{A,B}$.
\end{definition}
\begin{remark}
When  $\mu$ is the counting measure on $\mathbb N$ then such transformers are
known as  elementary operators whose some of the properties have
been studied in details(see \cite{Oke1} and the references therein on orthogonality property).
 \end{remark}
 This work is organized as follows: Section 1: Introduction; Section 2: Unitarily invariant norms; Section 3: Operator valued functions; Section 4: Norm inequalities for inner product type integral transformers and lastly; Section 5: Applications in quantum theory.

\section{Unitarily invariant norms}
In this section, we consider a special type of norms called the unitarily
invariant norm. We give its description in details which will be useful in the sequel.
Let
$\mathcal{C}_\infty(\mathcal{H})$ denote the space of all compact linear operators acting on a separable,
complex Hilbert space $\mathcal{H}$. Each symmetric gauge function $\Phi,$  simply denoted by (s.g.)
on sequences gives rise to  a unitarily invariant
(u.i) norm on operators defined by
$\left\|X\right\|_\Phi=\Phi(\{s_n(X)\}_{n=1}^\infty)$, with
$s_1(X)\ge s_2(X)\ge\hdots$ being the singular values of $X$, i.e.,
the eigenvalues of $|X|=(X^*X)^\frac12.$ We will denote by the
symbol $\left|\left|\left|\cdot\right|\right|\right|$ any such norm,
which is therefore defined on a naturally associated norm ideal
$\mathcal{C}_{\left|\left|\left|\cdot\right|\right|\right|}(\mathcal{H})$ of
 of $\mathcal{C}_\infty(\mathcal{H})$ and  satisfies the
invariance property $ |\|UXV|\|=|\|X|\|$ for all
$X\in\mathcal{C}_{\left|\left|\left|\cdot\right|\right|\right|}(\mathcal{H})$
and for all unitary operators $U,V\in \mathcal{B(H)}$.
One of the  well known among u.i.  norms are the Schatten $p$-norms
defined for $1\le p<\infty$ as $\|X\|_p=\sqrt[p]{\,\sum_{n=1}^\infty
s_n^p(X)}$, while $\|X\|_\infty =\|X\|=s_1(X)$ coincides with the
operator  norm $\|X\|$. Minimal and maximal u.i. norm are among
Schatten norms, i.e., $\|X\|_\infty\le|\|X\||\le\|X\|_1$ for all
$X\in\mathcal{C}_1(\mathcal{H})$ (see inequality (IV.38) in
\cite{jockosklocko}). For $f,g\in\mathcal{H}$, we will denote by
$g^*\otimes f$ one dimensional operator $(g^*\otimes f)h=\langle
h,g\rangle f$ for all $h\in\mathcal{H}$, and it is known that the linear span
of $\{g^*\otimes f\,|\, f,g\in \mathcal{H}$ is dense in each of
$\mathcal{C}_p(\mathcal{H})$ for $1\le p\le\infty$. Schatten
$p$-norms are also classical examples of $p$-reconvexized norms.
Namely, any u.i. norm  $\|.\|_\Phi$ could be
$p$-reconvexized for any $p\ge1$ by setting $\|A\|_{\Phi^{(p)}}
= \| |A|^p\|_{\Phi}^{\frac1p}$ for all $A\in \mathcal{B(H)}$ such that
$|A|^p\in \Phi(\mathcal{H})$. For the proof of the triangle
inequality and other properties of these norms see preliminary
section in \cite{joc09i} and for the characterization of the dual norm
for $p$-reconvexized one see Theorem 2.1 in \cite{joc09i}.
 The set $\mathcal{C}_{|||\cdot|||}=\{A \in \mathcal{K}(\mathcal{H}) : 
 \left\vert \left\vert \left\vert A \right\vert \right\vert \right\vert <
  \infty \}$ is a closed self-adjoint ideal $\mathcal{J}$ of $\mathcal{B}(
  \mathcal{H})$ containing finite rank operators. It enjoys the following 
  properties. First, for all $A,B\in \mathcal{B(H)}$ and $X \in \mathcal{J}$,
$
\left\vert \left\vert \left\vert AXB\right\vert \right\vert \right\vert \leq
\left\vert \left\vert A\right\vert \right\vert \ \left\vert \left\vert
\left\vert X\right\vert \right\vert \right\vert \ \left\vert \left\vert
B\right\vert \right\vert\,.
$
Secondly, if $X$ is a rank one operator, then
$
\left\vert \left\vert \left\vert X\right\vert \right\vert \right\vert =\|X\|\,.
$
The Ky Fan norm as an example of unitarily invariant norms is defined by $\| A\|
_{(k)}=\sum_{j=1}^{k}s_{j}(A)$ for $k=1,2,\ldots$. The Ky Fan
dominance theorem \cite{Con} states that $\| A\|
_{(k)}\leq \| B\| _{(k)}\,\,(k=1,2,\ldots )$ if and only if
$|||A||| \leq |||B|||$ for all unitarily invariant norms
$|||\cdot|||$, see \cite{Kry} for more information on unitarily invariant norms.
The inequalities involving unitarily invariant norms have been of special interest
(see  \cite{Con} and the references therein).

\begin{lemma}\label{Interpolacija} Let $\mathcal{T}$ and $\mathcal{S}$ be linear mappings defined
 on $\mathcal{C}_\infty(\mathcal{H}).$ If
$\|\mathcal{T}X\|\le\|\mathcal{S}X\|\mbox{ for all }X\in \mathcal{C}_\infty(\mathcal{H}),
\;\|\mathcal{T}X\|_1\le\|\mathcal{S}X\|_1\mbox{ for all }X\in
\mathcal{C}_\infty(\mathcal{H})$
then
$ \mathcal{T}X\le\mathcal{S}X$
for all unitarily invariant norms.
\end{lemma}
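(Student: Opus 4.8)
The plan is to reduce the assertion, which quantifies over \emph{all} unitarily invariant norms simultaneously, to a countable family of scalar inequalities and then to interpolate between the two endpoints furnished by the hypotheses (the operator norm, corresponding to $k=1$, and the trace norm, corresponding to $k=\infty$). The principal tool is the Ky Fan dominance theorem quoted above: since $\vert\vert\vert \mathcal{T}X\vert\vert\vert \le \vert\vert\vert \mathcal{S}X\vert\vert\vert$ holds for every u.i. norm if and only if $\Vert \mathcal{T}X\Vert_{(k)} \le \Vert \mathcal{S}X\Vert_{(k)}$ for every $k=1,2,\dots$, it suffices to establish this chain of Ky Fan inequalities, for each fixed $X\in\mathcal{C}_\infty(\mathcal{H})$.

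The second ingredient I would set up is the $K$-functional description of the Ky Fan norm. For any compact $A$ one has
\[ \Vert A\Vert_{(k)} \;=\; \min\bigl\{\,\Vert P\Vert_1 + k\,\Vert Q\Vert \;:\; A = P+Q\,\bigr\}, \]
the minimum being attained by truncating the singular value decomposition $A=\sum_j s_j(A)\,u_j v_j^*$ at the level $s_k(A)$, namely $Q=\sum_j \min(s_j,s_k)\,u_j v_j^*$ and $P=A-Q$. The inequality $\Vert A\Vert_{(k)}\le \Vert P\Vert_1 + k\Vert Q\Vert$ for an \emph{arbitrary} splitting follows from the triangle inequality for $\Vert\cdot\Vert_{(k)}$ together with the two elementary bounds $\Vert P\Vert_{(k)}\le\Vert P\Vert_1$ and $\Vert Q\Vert_{(k)}\le k\Vert Q\Vert$. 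This formula is precisely the device that converts control at the two endpoints into control of every intermediate norm.

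With these in hand, the main manoeuvre is to split the \emph{argument} $X$ rather than its image: for any decomposition $X=X_1+X_2$ linearity gives $\mathcal{T}X=\mathcal{T}X_1+\mathcal{T}X_2$, whence by subadditivity of $\Vert\cdot\Vert_{(k)}$ and the two hypotheses
\[ \Vert \mathcal{T}X\Vert_{(k)} \;\le\; \Vert \mathcal{T}X_1\Vert_1 + k\,\Vert \mathcal{T}X_2\Vert \;\le\; \Vert \mathcal{S}X_1\Vert_1 + k\,\Vert \mathcal{S}X_2\Vert. \]
Minimising the right-hand side over all splittings $X=X_1+X_2$ and matching the result against the $K$-functional formula applied to $\mathcal{S}X$ is what should deliver $\Vert \mathcal{T}X\Vert_{(k)}\le \Vert \mathcal{S}X\Vert_{(k)}$, and hence the theorem via Ky Fan dominance.

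I expect the delicate point to be exactly this last optimisation. The $K$-functional formula realises $\Vert \mathcal{S}X\Vert_{(k)}$ as an infimum over \emph{all} decompositions of the operator $\mathcal{S}X$, whereas splitting $X$ produces only those decompositions of $\mathcal{S}X$ of the special form $(\mathcal{S}X_1,\mathcal{S}X_2)$. To close the argument one must arrange the splitting $X=X_1+X_2$ so that $\mathcal{S}X_1$ and $\mathcal{S}X_2$ reproduce the extremal truncation of $\mathcal{S}X$ at the level $s_k(\mathcal{S}X)$; in other words, one must lift the optimal decomposition of $\mathcal{S}X$ back through $\mathcal{S}$. This lifting is automatic when $\mathcal{S}$ is the identity, which is why the classical single-transformer interpolation theorem goes through cleanly, so I would concentrate most of the effort on securing an analogous splitting of $X$ in the present two-transformer setting.
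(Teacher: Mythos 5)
Your reduction to Ky Fan norms via the dominance theorem, and the $K$-functional identity $\|A\|_{(k)}=\min\{\|P\|_1+k\|Q\|:\;A=P+Q\}$, are both correct, and in the special case $\mathcal{S}=\mathrm{id}$ (more generally $\mathcal{S}$ bijective, applying your argument to $\mathcal{T}\mathcal{S}^{-1}$) your splitting of the argument $X$ is exactly the classical interpolation proof. But the point you flag as ``delicate''---lifting the extremal decomposition of $\mathcal{S}X$ back through $\mathcal{S}$---is not a technicality awaiting a clever splitting: it is unobtainable for a general linear $\mathcal{S}$, and in fact the lemma as stated is false, so no completion of your plan exists. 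Fix orthonormal $e_1,e_2,e_3\in\mathcal{H}$ and $0<\varepsilon<\frac12$, let $A$ and $B$ be the finite-rank operators diagonal with respect to $e_1,e_2,e_3$ with entries $(1,2\varepsilon,0)$ and $(1,\varepsilon,\varepsilon)$ respectively, and set $\mathcal{T}X=\langle Xe_1,e_1\rangle A$ and $\mathcal{S}X=\langle Xe_1,e_1\rangle B$. For every $X\in\mathcal{C}_\infty(\mathcal{H})$ one has $\|\mathcal{T}X\|=\|\mathcal{S}X\|$ and $\|\mathcal{T}X\|_1=\|\mathcal{S}X\|_1$ (both singular value lists have maximum $1$ and sum $1+2\varepsilon$), yet for the unitarily invariant norm $\|\cdot\|_{(2)}$ one gets $\|\mathcal{T}X\|_{(2)}=(1+2\varepsilon)|\langle Xe_1,e_1\rangle|>(1+\varepsilon)|\langle Xe_1,e_1\rangle|=\|\mathcal{S}X\|_{(2)}$ whenever $\langle Xe_1,e_1\rangle\neq0$. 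This is precisely your obstruction made concrete: splitting $X$ reaches only decompositions of $\mathcal{S}X$ lying in the range of $\mathcal{S}$, which here is one-dimensional, so your scheme can never do better than $\min\{k\|\mathcal{S}X\|,\|\mathcal{S}X\|_1\}$, and that genuinely exceeds $\|\mathcal{S}X\|_{(k)}$.

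For comparison, the paper runs the same reduction on the dual side, using $\|\cdot\|_{(k)}^\sharp=\max\{\|\cdot\|,\frac1k\|\cdot\|_1\}$, but its first step, ``hence $\|\mathcal{T}^*X\|\le\|\mathcal{S}^*X\|$ and $\|\mathcal{T}^*X\|_1\le\|\mathcal{S}^*X\|_1$,'' is your gap in dual clothing: pointwise domination between two maps passes neither to adjoints nor through dual norms (the example above defeats that step as well), and even granting it, extracting $\|\mathcal{T}X\|_{(k)}\le\|\mathcal{S}X\|_{(k)}$ ``by duality'' would require the very lifting you identified, now on the dual side. So your honest flag locates exactly the point at which the published proof also fails. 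The statement becomes true, and your $K$-functional argument closes without change, under additional hypotheses that make the lifting available: for instance $\mathcal{S}$ invertible on the relevant ideals (reduce to $\mathcal{T}\mathcal{S}^{-1}$ and the classical lemma), or $\mathcal{S}$ a two-sided multiplication $X\mapsto CXD$ by invertible positive operators---given the optimal $CXD=P+Q$ one takes $X_1=C^{-1}PD^{-1}$ and $X_2=C^{-1}QD^{-1}$---which, with an approximation argument for injective $C,D$, is the form actually needed for the Cauchy--Schwarz applications in Section 4.
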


\begin{proof} The norms $\|\cdot\|$ and $\|\cdot\|_1$ are dual to each other in the sense that
$\|X\|=\sup_{\|Y\|_1=1}|tr(XY)|$ and $ \|X\|_1=\sup_{\|Y\|=1}|tr(XY)|.$
Hence, $\|\mathcal{T}^*X\|\le\|\mathcal{S}^*X\|$, $\|\mathcal{T}^*X\|_1\le\|\mathcal{S}^*X\|_1$.
Consider the Ky Fan norm $\|\cdot\|_{(k)}$. Its dual norm is
$\|\cdot\|_{(k)}^\sharp=\max\{\|\cdot\|,(1/k)\|\cdot\|_1\}$.
Thus, by duality, $\|\mathcal{T}X\|_{(k)}\le\|\mathcal{S}X\|_{(k)}$ and the result
 follows by Ky Fan dominance property as shown in \cite{Kry}.
\end{proof}
An operator $A\in \mathcal{B(H)}$ is called $G_{1}$ operator if the  growth condition
$$
\left\Vert (z-A)^{-1}\right\Vert =\frac{1}{{\rm{dist}}(z,\sigma (A))}
$$
holds for all $z$ not in the spectrum $\sigma (A)$ of $A$. Here ${\rm{dist}}(z,\sigma (A))$ denotes the distance between $z$ and $\sigma
(A)$. It is known that hyponormal (in particular, normal) operators are $
G_{1}$ operators \cite{jockosklocko}.
Let $A, B\in \mathcal{B(H)}$ and let $f$ be a function which is analytic on an open neighborhood $
\Omega $ of $\sigma (A)$ in the complex plane. Then $f(A)$ denotes the
operator defined on $\mathcal{H}$ by
$
f(A)=\frac{1}{2\pi i}\int\limits_{C}f(z)(z-A)^{-1}dz,  \label{4}
$
 called the Riesz-Dunford integral, where $C$ is a positively oriented simple closed rectifiable contour
surrounding $\sigma (A)$ in $\Omega $ (see \cite{joc09i} and the references therein). The spectral mapping theorem asserts that $\sigma (f(A))=f(\sigma (A))$. Throughout this note,  $\mathbb{D}=\{z\in\mathbb{C}:\left\vert z\right\vert <1\}$ denotes the unit disk, $\partial\mathbb{D}$ stands for the boundary of $\mathbb{D}$ and $d_{A}={\rm{dist}}(\partial\mathbb{D},\sigma (A))$. In addition, we adopt the notation
$\mathfrak{H}=\{f: \mathbb{D}\to \mathbb{C}: f \mbox{~is analytic}, \Re(f)>0 \mbox{~and} f(0)=1\}.$
 In this work, we present some upper bounds for $|||f(A)Xg(B)\pm X|||$, where $A, B$ are $G_{1}$ operators, $|||\cdot|||$ is a unitarily invariant norm and $f, g\in \mathfrak{H}$. Further, we find some new upper bounds for the the Schatten $2$-norm of $f(A)X\pm Xg(B)$. Up-to this juncture, we find some upper estimates for $|||f(A)Xg(B)+ X|||$ in terms of $|||\,|AXB|+|X|\,|||$ and $|||f(A)Xg(B)- X|||$ in terms of $|||\,|AX|+|XB|\,|||$, where $A, B$ are $G_{1}$ operators,  and $f, g\in \mathcal{H}$.
\begin{prop}\label{P1}
If $A,B\in \mathcal{B(H)}$ are $G_{1}$ operators with $\sigma (A)\cup \sigma (B)\subset\mathbb{D}$ and $f, g \in \mathcal{H}$, then for every $X\in \mathcal{B(H)}$ and for every unitarily invariant norm $\left\vert \left\vert \left\vert\cdot \right\vert \right\vert \right\vert $, the inequality
$
\left\vert \left\vert \left\vert f(A)Xg(B)+X\right\vert \right\vert
\right\vert \leq  \frac{2\sqrt{2}}{d_{A}d_{B}} \left\vert\left\vert \left\vert\,|AXB|+|X|\,\right\vert \right\vert \right\vert  \label{5}
$
holds.
\end{prop}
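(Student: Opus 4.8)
The plan is to combine the Riesz--Herglotz integral representation of functions in $\mathfrak{H}$ with the ideal property of $|||\cdot|||$ and the $G_{1}$ resolvent bound. Each $f\in\mathfrak{H}$ (analytic on $\mathbb{D}$ with $\Re f>0$ and $f(0)=1$) admits a representation $f(z)=\int_{\partial\mathbb{D}}\frac{\lambda+z}{\lambda-z}\,d\mu(\lambda)$ with $\mu$ a probability measure on $\partial\mathbb{D}$, and likewise $g(z)=\int_{\partial\mathbb{D}}\frac{\eta+z}{\eta-z}\,d\nu(\eta)$ with $\nu$ a probability measure. Since $\sigma(A)\cup\sigma(B)\subset\mathbb{D}$, for $\lambda,\eta\in\partial\mathbb{D}$ the kernels $z\mapsto\frac{\lambda+z}{\lambda-z},\ \frac{\eta+z}{\eta-z}$ are analytic on a neighbourhood of the spectra, so the Riesz--Dunford calculus gives $f(A)=\int_{\partial\mathbb{D}}(\lambda+A)(\lambda-A)^{-1}d\mu(\lambda)$ and $g(B)=\int_{\partial\mathbb{D}}(\eta+B)(\eta-B)^{-1}d\nu(\eta)$. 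Using $\mu(\partial\mathbb{D})=\nu(\partial\mathbb{D})=1$ to write $X=\int\int X\,d\mu\,d\nu$ together with Fubini, I would obtain $f(A)Xg(B)+X=\int_{\partial\mathbb{D}}\int_{\partial\mathbb{D}}\big[(\lambda+A)(\lambda-A)^{-1}X(\eta+B)(\eta-B)^{-1}+X\big]\,d\mu(\lambda)\,d\nu(\eta)$, so by the triangle inequality for the vector-valued integral it suffices to bound the integrand uniformly for $\lambda,\eta\in\partial\mathbb{D}$.

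The heart of the argument is the algebraic identity $(\lambda+A)(\lambda-A)^{-1}X(\eta+B)(\eta-B)^{-1}+X=2(\lambda-A)^{-1}\big(\lambda\eta X+AXB\big)(\eta-B)^{-1}$, which I would verify by multiplying on the left by $(\lambda-A)$ and on the right by $(\eta-B)$ and expanding $(\lambda+A)X(\eta+B)+(\lambda-A)X(\eta-B)=2(\lambda\eta X+AXB)$, where the cross terms $\lambda XB$ and $\eta AX$ cancel. Applying the ideal property $|||SZT|||\le\|S\|\,|||Z|||\,\|T\|$ together with the $G_{1}$ bound $\|(\lambda-A)^{-1}\|=1/\mathrm{dist}(\lambda,\sigma(A))\le 1/d_{A}$ for $\lambda\in\partial\mathbb{D}$ (and likewise $\|(\eta-B)^{-1}\|\le 1/d_{B}$), the identity yields the uniform estimate $|||(\lambda+A)(\lambda-A)^{-1}X(\eta+B)(\eta-B)^{-1}+X|||\le\frac{2}{d_{A}d_{B}}\,|||\lambda\eta X+AXB|||$.

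It then remains to prove the norm inequality $|||\lambda\eta X+AXB|||\le\sqrt{2}\,|||\,|AXB|+|X|\,|||$ when $|\lambda\eta|=1$. Writing $P=\lambda\eta X$ (so $|P|=|X|$) and $Q=AXB$ with polar decompositions $P=U|P|,\ Q=V|Q|$, I would factor $P+Q=(U\ V)\binom{|P|}{|Q|}$; since the row operator $(U\ V)$ satisfies $(U\ V)(U\ V)^{*}=UU^{*}+VV^{*}\le 2I$, hence $\|(U\ V)\|\le\sqrt{2}$, the ideal property gives $|||P+Q|||\le\sqrt{2}\,\big|\big|\big|\binom{|P|}{|Q|}\big|\big|\big|=\sqrt{2}\,|||(|P|^{2}+|Q|^{2})^{1/2}|||$. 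Finally I would invoke the majorization fact $|||(S^{2}+T^{2})^{1/2}|||\le|||S+T|||$ for all $S,T\ge 0$ (equivalently, by the Ky Fan dominance theorem stated above, $\sum_{j\le k}\sqrt{\lambda_{j}(S^{2}+T^{2})}\le\sum_{j\le k}\lambda_{j}(S+T)$ for every $k$) with $S=|X|,\ T=|AXB|$, reaching $|||\lambda\eta X+AXB|||\le\sqrt{2}\,|||\,|AXB|+|X|\,|||$. Substituting this into the uniform pointwise bound and integrating against the probability measures $d\mu\,d\nu$ gives the claimed inequality. The main obstacle I anticipate is this last sub-lemma: the naive factorization fails because the operator inequality $S^{2}+T^{2}\le(S+T)^{2}$ is false in general, so the positive-operator step producing the $\sqrt{2}$ must be argued through Ky Fan dominance and weak majorization rather than an operator inequality; the remaining pieces (Herglotz representation, Fubini, and the resolvent estimate) are routine bookkeeping.
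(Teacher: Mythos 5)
Your proposal is correct and takes essentially the same route as the paper's proof: the Herglotz representation of $f,g\in\mathfrak{H}$, the double-integral representation of $f(A)Xg(B)+X$, the resolvent identity $(\lambda+A)(\lambda-A)^{-1}X(\eta+B)(\eta-B)^{-1}+X=2(\lambda-A)^{-1}(\lambda\eta X+AXB)(\eta-B)^{-1}$ (which the paper buries in ``by some computation'' and you helpfully make explicit), the $G_1$ bounds $\Vert(\lambda-A)^{-1}\Vert\le 1/d_A$, $\Vert(\eta-B)^{-1}\Vert\le 1/d_B$, and finally $|||\lambda\eta X+AXB|||\le\sqrt{2}\,|||\,|AXB|+|X|\,|||$. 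Your last step, obtaining the $\sqrt{2}$ from the row--column factorization $P+Q=(U\ V)\binom{|P|}{|Q|}$ and then the weak-majorization fact $|||(S^2+T^2)^{1/2}|||\le|||S+T|||$ for $S,T\ge0$, is in substance the same argument the paper sketches for its key estimate, since that majorization fact is exactly the quoted Ando--Zhan inequality $|||h(C+D)|||\le|||h(C)+h(D)|||$ with $h(t)=\sqrt{t}$ combined with Ky Fan dominance, and you correctly flag that the naive operator inequality $S^2+T^2\le(S+T)^2$ is unavailable.
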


\begin{proof}
From the Herglotz representation theorem \cite{JOC} it follows that $f\in \mathcal{H}$ can be
represented as
\begin{eqnarray}
f(z)=\int\limits_{0}^{2\pi }\frac{e^{i\alpha}+z}{e^{i\alpha}-z}d\mu(\alpha)+i\Im f(0)=\int\limits_{0}^{2\pi }\frac{e^{i\alpha}+z}{e^{i\alpha}-z}d\mu(\alpha)  \label{6}
\end{eqnarray}
where $\mu $ is a positive Borel measure on the interval $[0,2\pi ]$ with
finite total mass $\int\limits_{0}^{2\pi }d\mu(\alpha)=f(0)=1$. Similarly $g(z)=\int\limits_{0}^{2\pi }\frac{e^{i\alpha}+z}{e^{i\alpha}-z}d\nu(\alpha)$ for some positive Borel measure $\nu$ on the interval $[0,2\pi ]$ with finite total mass $1$. We have
\begin{eqnarray*}
&&\hspace{-0.5in}f(A)Xg(B)+X\\
&=&\int\limits_{0}^{2\pi }\int\limits_{0}^{2\pi }\left[ \left( e^{i\alpha}-A\right)^{-1}
\left( e^{i\alpha}+A\right)X\left( e^{i\beta}+B\right) \left(
e^{i\beta}-B\right)^{-1}+X\right] d\mu(\alpha)d\nu(\beta).
\end{eqnarray*}
By some computation we have
\begin{eqnarray} \label{s1}
&&\hspace{-1.5cm}\left\vert \left\vert \left\vert f(A)Xg(B)+X\right\vert \right\vert
\right\vert \nonumber\\
&\leq& \int\limits_{0}^{2\pi }\int\limits_{0}^{2\pi }2\left\Vert \left( e^{i\alpha
}-A\right)^{-1}\right\Vert \left\vert \left\vert \left\vert AXB+
e^{i\alpha}Xe^{i\beta}\right\vert \right\vert \right\vert \left\Vert \left( e^{i\alpha
}-B\right)^{-1}\right\Vert d\mu(\alpha)d\nu(\beta).\nonumber \\
\end{eqnarray}
Since $A$ and $B$ are $G_{1}$ operators, we deduce  that
\begin{eqnarray}\label{s2}
\left\vert \left\vert \left( e^{i\alpha}-A\right)^{-1}\right\vert
\right\vert =\frac{1}{{\rm{dist}}(e^{i\alpha},\sigma (A))}\leq \frac{1}{{\rm{dist}}(\partial\mathbb{D},\sigma (A))}=\frac{1}{d_{A}},
\end{eqnarray}
and similarly
$
\left\vert \left\vert \left( e^{i\beta}-B\right)^{-1}\right\vert \right\vert \leq \frac{1}{d_{B}}.
$
Now we know that for every positive operators $C, D$, every non-negative operator monotone function $h(t)$ on $[0,\infty)$ and every unitarily invariant norm $|||\cdot|||$  it holds that $|||h(A+B)||| \leq |||h(A)+h(B)|||$.
Now from the Ky Fan dominance theorem and  we infer that
\begin{eqnarray}\label{s4}
 \left\vert\left\vert \left\vert  AXB+ e^{i\alpha}Xe^{i\beta} \right\vert \right\vert \right\vert \leq \sqrt{2} \left\vert\left\vert \left\vert\, |AXB|+|X| \, \right\vert \right\vert \right\vert.
\end{eqnarray}
Therefore, it follows from Inequality \ref{s1}, Inequality \ref{s2} and Equation \ref{s4} that $$
\left\vert \left\vert \left\vert f(A)Xg(B)+X\right\vert \right\vert
\right\vert \leq  \frac{2\sqrt{2}}{d_{A}d_{B}} \left\vert\left\vert \left\vert\,|AXB|+|X|\,\right\vert \right\vert \right\vert
$$
which completes the proof.
\end{proof}

\begin{theorem}\label{c1}
Let $f, g\in \mathcal{H}$ and $A\in\mathcal{B(H)}$ be a $G_{1}$ operator with $\sigma (A)\subset\mathbb{D}$. The inequality
$
\left\vert \left\vert \left\vert f(A)Xg(A^*)+X\right\vert \right\vert
\right\vert \leq  \frac{2}{d_{A}^2} \left\vert\left\vert \left\vert\ A|X|A^*+|X|\ \right\vert \right\vert \right\vert
$
holds for every normal operator  $X\in\mathcal{B(H)}$ commuting with $A$ and for every unitarily invariant norm $\left\vert \left\vert \left\vert\cdot \right\vert \right\vert \right\vert $.
\end{theorem}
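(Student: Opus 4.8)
The plan is to follow the proof of Proposition \ref{P1} verbatim in the special case $B=A^*$, but to replace the general estimate \eqref{s4}, which costs a factor $\sqrt{2}$, by an \emph{exact} factorization that exploits the normality of $X$ and its commutativity with $A$; this is precisely what upgrades the constant from $2\sqrt{2}$ to $2$. First I would invoke the Herglotz representation \eqref{6} for both $f$ and $g$, writing $f(z)=\int_0^{2\pi}\frac{e^{i\alpha}+z}{e^{i\alpha}-z}\,d\mu(\alpha)$ and $g(z)=\int_0^{2\pi}\frac{e^{i\beta}+z}{e^{i\beta}-z}\,d\nu(\beta)$ with $\mu,\nu$ positive Borel measures of total mass $1$. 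Substituting $z\mapsto A$ and $z\mapsto A^*$ and reusing the resolvent bookkeeping already carried out in Proposition \ref{P1} (now with $B=A^*$), the integrand $f(A)Xg(A^*)+X$ factors termwise as
\begin{equation*}
(e^{i\alpha}-A)^{-1}\,2\bigl(AXA^*+e^{i(\alpha+\beta)}X\bigr)\,(e^{i\beta}-A^*)^{-1}.
\end{equation*}

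Next I would apply the triangle inequality for the Gel'fand integral together with the submultiplicativity $|||CYD|||\le\|C\|\,|||Y|||\,\|D\|$. The two resolvent norms are controlled by the $G_1$ condition exactly as in \eqref{s2}: since $\partial\mathbb{D}$ is invariant under conjugation we have $d_{A^*}=d_A$, and $A^*$ is again a $G_1$ operator, so both $\|(e^{i\alpha}-A)^{-1}\|$ and $\|(e^{i\beta}-A^*)^{-1}\|$ are at most $1/d_A$. This yields
\begin{equation*}
|||f(A)Xg(A^*)+X|||\le \frac{2}{d_A^2}\int_0^{2\pi}\!\!\int_0^{2\pi}|||AXA^*+e^{i(\alpha+\beta)}X|||\,d\mu(\alpha)\,d\nu(\beta).
\end{equation*}

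The heart of the argument, and the step I expect to be the main obstacle, is to show for every unimodular $\lambda=e^{i(\alpha+\beta)}$ that $|||AXA^*+\lambda X|||\le|||A|X|A^*+|X|\,|||$, with no stray $\sqrt{2}$. Here I would use the polar decomposition $X=U|X|$ of the normal operator $X$, in which $U$ is a partial isometry lying in the von Neumann algebra generated by $X$ and commuting with $|X|$. Because $X$ is normal and $AX=XA$, the Fuglede--Putnam theorem gives $AX^*=X^*A$, hence $A$ (and, after taking adjoints, $A^*$) commutes with both $U$ and $|X|$. Consequently $AXA^*+\lambda X=U\bigl(A|X|A^*+\lambda|X|\bigr)$, and since $U$ is a contraction the norm drops to $|||A|X|A^*+\lambda|X|\,|||$. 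Finally, writing $P=A|X|A^*=(A|X|^{1/2})(A|X|^{1/2})^*\ge0$ and $Q=|X|\ge0$, I would invoke the standard fact that $|||P+\lambda Q|||\le|||P+Q|||$ for positive $P,Q$ and $|\lambda|\le1$ (which already at the scalar level reads $|p+\lambda q|\le p+q$, and in general follows by Ky Fan dominance via a pinching argument). Inserting this uniform bound and using $\int d\mu=\int d\nu=1$ collapses the double integral and gives the claimed inequality. The two delicate points are the Fuglede--Putnam step, which is what lets the partial isometry $U$ be pulled out cleanly, and a careful proof of the positive-operator estimate $|||P+\lambda Q|||\le|||P+Q|||$.
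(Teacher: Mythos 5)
Your proposal is correct, and while it runs on the same Herglotz--resolvent chassis as the paper (both arguments reduce, via Proposition \ref{P1} with $B=A^*$ and $d_{A^*}=d_A$, to bounding $|||AXA^*+\lambda X|||$ for unimodular $\lambda$ by $|||A|X|A^*+|X|\,|||$ with constant $1$ instead of $\sqrt{2}$), it handles that crucial step by a genuinely different mechanism. The paper's proof cites the normal-operator inequality $|||C+D|||\le |||\,|C|+|D|\,|||$, applied with $C=AXA^*$ and $D=e^{i(\alpha+\beta)}X$; there Fuglede--Putnam serves only to guarantee that $AXA^*$ is normal and that $|AXA^*|=A|X|A^*$, after which Equation \ref{s4} holds with constant $1$ and Proposition \ref{P1} finishes the proof. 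You instead use Fuglede--Putnam to place $A$ in the commutant of the von Neumann algebra generated by the normal operator $X$, so that the partial isometry $U$ and the modulus $|X|$ from the polar decomposition (both lying in that algebra) commute with $A$; this lets you factor $AXA^*+\lambda X=U\bigl(A|X|A^*+\lambda|X|\bigr)$ and reduce to the positive-operator estimate $|||P+\lambda Q|||\le |||P+Q|||$ for $P,Q\ge 0$, $|\lambda|\le 1$. That estimate is true, so your route in effect \emph{proves} the needed instance of the normal-sum inequality rather than quoting it, which makes the argument more self-contained at the cost of one auxiliary lemma. Two caveats. First, your justification of $|||P+\lambda Q|||\le|||P+Q|||$ (a scalar analogy plus a ``pinching'' hint) is not yet a proof; the clean argument writes $P+\lambda Q=R^*(I\oplus\lambda I)R$ with $R=\begin{pmatrix}P^{1/2}\\ Q^{1/2}\end{pmatrix}$, notes $R^*R=P+Q$ and $\|I\oplus\lambda I\|\le 1$, and then applies Horn's singular-value product inequality $\prod_{j\le k}s_j\bigl(R^*(I\oplus\lambda I)R\bigr)\le\prod_{j\le k}s_j(R^*)\,s_j(R)=\prod_{j\le k}s_j(P+Q)$, whence log-majorization gives weak majorization and Ky Fan dominance yields the bound for every unitarily invariant norm. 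Second, a small slip: taking adjoints of $AU=UA$ shows $A^*$ commutes with $U^*$, not with $U$ (the latter does also hold, since $U^*$ lies in the same von Neumann algebra), but your computation only pulls $U$ out on the left and hence only uses $AU=UA$, so this is harmless.
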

\begin{proof}
Let $X$ and  $AXB$  be normal. Since $||| C+D |||\leq |||\,|C|+|D|\,|||$ for any normal operators $C$ and $D$, the constant $\sqrt{2}$ can be reduced to $1$ in Equation \ref{s4}.
Now from Fuglede--Putnam theorem, if $A\in \mathcal{B(H)}$ is an operator, $X\in {\mathcal(B)}({\mathcal(H)})$ is normal and $AX=XA$, then $AX^*=X^*A$. Thus if $X$ is a normal operator commuting with a $G_{1}$ operator $A$, then $AXA^*$ is normal, $|AXA^*|=A|X|A^*$ and $A^*$ is a $G_1$ operator with  $d_{A^*}=d_A$. By Proposition \ref{P1} the proof is complete.
\end{proof}
Next, letting $A=B$ in Proposition \ref{P1}, we obtain the following result.
\begin{cor}\label{C2}
Let $f, g\in \mathcal{H}$ and $A\in \mathcal{B(H)}$ be a $G_{1}$ operator with $\sigma (A)\subset\mathbb{D}$. Then
$
\left\vert \left\vert \left\vert f(A)Xg(A)-X\right\vert \right\vert
\right\vert \leq  \frac{2\sqrt{2}}{d_{A}^2} \left\vert\left\vert \left\vert \,|AX|+|XA|\,\right\vert \right\vert \right\vert
$
for every $X\in\mathcal{B}(\mathcal{H})$ and for every unitarily invariant norm $\left\vert \left\vert \left\vert
\cdot \right\vert \right\vert \right\vert $.
\end{cor}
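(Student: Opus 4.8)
The plan is to repeat the argument of Proposition~\ref{P1} almost verbatim, but with a minus sign in place of the plus sign throughout, and then to put $B=A$. Since $f$ and $g$ admit the Herglotz representation used in Proposition~\ref{P1}, I would write $f(A)=\int_0^{2\pi}(e^{i\alpha}-A)^{-1}(e^{i\alpha}+A)\,d\mu(\alpha)$ and $g(A)=\int_0^{2\pi}(e^{i\beta}+A)(e^{i\beta}-A)^{-1}\,d\nu(\beta)$, where $\mu$ and $\nu$ are positive Borel measures of total mass $1$. Because both masses equal $1$, the operator $X$ itself can be written as the double integral $\int_0^{2\pi}\!\int_0^{2\pi}X\,d\mu(\alpha)\,d\nu(\beta)$, so that $f(A)Xg(A)-X$ becomes a single double integral whose integrand I can then estimate pointwise.

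The decisive step is the factorization obtained by inserting $(e^{i\alpha}-A)^{-1}(e^{i\alpha}-A)$ on the left of $X$ and $(e^{i\beta}-A)(e^{i\beta}-A)^{-1}$ on the right, which recasts the integrand as
$$(e^{i\alpha}-A)^{-1}\Big[(e^{i\alpha}+A)X(e^{i\beta}+A)-(e^{i\alpha}-A)X(e^{i\beta}-A)\Big](e^{i\beta}-A)^{-1}.$$
Expanding the bracket, the ``diagonal'' terms $e^{i\alpha}e^{i\beta}X$ and $AXA$ cancel, leaving precisely $2\big(e^{i\alpha}XA+e^{i\beta}AX\big)$. This is exactly where the minus sign does its work: in Proposition~\ref{P1} the sum retained the diagonal terms and produced $AXB+e^{i\alpha}Xe^{i\beta}$, whereas here the difference retains the off-diagonal terms $e^{i\alpha}XA+e^{i\beta}AX$, which is what ultimately turns the right-hand side into $|AX|+|XA|$ rather than $|AXA|+|X|$.

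With the integrand in this form I would bound it in any unitarily invariant norm by submultiplicativity, $|||CYD|||\le\|C\|\,|||Y|||\,\|D\|$, and apply the $G_{1}$ resolvent estimate $\|(e^{i\alpha}-A)^{-1}\|\le 1/d_{A}$ of Inequality~\ref{s2} (with $B=A$) on both sides; integrating out the two probability measures then contributes the factor $2/d_{A}^{2}$. It remains to dominate $|||e^{i\alpha}XA+e^{i\beta}AX|||$. Setting $M=e^{i\beta}AX$ and $N=e^{i\alpha}XA$, so that $|M|=|AX|$ and $|N|=|XA|$ because the scalars are unimodular, the reasoning behind Equation~\ref{s4} applies unchanged: the parallelogram identity gives $|M+N|^{2}\le 2(|M|^{2}+|N|^{2})$, operator monotonicity of $t\mapsto\sqrt{t}$ gives $|M+N|\le\sqrt{2}\,(|M|^{2}+|N|^{2})^{1/2}$, and the operator-monotone norm inequality $|||(P+Q)^{1/2}|||\le|||P^{1/2}+Q^{1/2}|||$ with $P=|M|^{2}$, $Q=|N|^{2}$ yields $|||M+N|||\le\sqrt{2}\,|||\,|AX|+|XA|\,|||$. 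Collecting the constants gives the stated bound $\tfrac{2\sqrt{2}}{d_{A}^{2}}\,|||\,|AX|+|XA|\,|||$. I expect the only real obstacle to be the sign bookkeeping in the bracket expansion, since getting the surviving terms right is exactly what distinguishes this corollary from the naive $B=A$ specialization of Proposition~\ref{P1}; everything else is the template of Proposition~\ref{P1} reused line for line.
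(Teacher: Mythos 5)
Your proposal is correct, and in fact it supplies the content that the paper itself omits: the paper's entire ``proof'' of Corollary~\ref{C2} is the remark ``letting $A=B$ in Proposition~\ref{P1}'', which taken literally yields $|||f(A)Xg(A)+X|||\le \frac{2\sqrt{2}}{d_A^2}|||\,|AXA|+|X|\,|||$ --- the plus sign and a different right-hand side --- so the minus-sign statement with $|AX|+|XA|$ cannot be obtained by mere specialization, and rerunning the Herglotz argument with the difference is unavoidable. That is exactly what you do, and your central computation is right: after inserting $(e^{i\alpha}-A)^{-1}(e^{i\alpha}-A)$ and $(e^{i\beta}-A)(e^{i\beta}-A)^{-1}$ around the $-X$ term, the bracket expands as
\begin{equation*}
(e^{i\alpha}+A)X(e^{i\beta}+A)-(e^{i\alpha}-A)X(e^{i\beta}-A)=2\bigl(e^{i\alpha}XA+e^{i\beta}AX\bigr),
\end{equation*}
the diagonal terms $e^{i(\alpha+\beta)}X$ and $AXA$ cancelling, which is precisely what converts the $+$/$|AXA|+|X|$ picture of Proposition~\ref{P1} into the $-$/$|AX|+|XA|$ picture here. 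All the remaining ingredients you invoke are available in the paper and used correctly: submultiplicativity $|||CYD|||\le\|C\|\,|||Y|||\,\|D\|$ from Section~2; the $G_1$ resolvent bound of Inequality~\ref{s2} applied on both sides (legitimate since $\sigma(A)\subset\mathbb{D}$ keeps $e^{i\alpha}$ off the spectrum), giving $2/d_A^2$ after integrating the two probability measures, the bound being uniform in $\alpha,\beta$; and, for the $\sqrt{2}$, the unimodularity observation $|e^{i\beta}AX|=|AX|$, $|e^{i\alpha}XA|=|XA|$, the operator parallelogram inequality $|M+N|^2\le 2(|M|^2+|N|^2)$, Weyl monotonicity of singular values together with operator monotonicity of $t\mapsto\sqrt{t}$, and the quoted inequality $|||h(P+Q)|||\le|||h(P)+h(Q)|||$ with $h(t)=\sqrt{t}$, exactly mirroring the derivation of Equation~\ref{s4}. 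The only hygiene point, inherited from Proposition~\ref{P1} itself, is that passing $|||\cdot|||$ inside the double Gel'fand integral uses the triangle inequality for the integral, so the conclusion should be read as: whenever $|AX|+|XA|$ lies in the norm ideal, so does $f(A)Xg(A)-X$, with the stated bound. Net assessment: your route is the honest proof of the corollary, and it is what the paper's one-line derivation implicitly presupposes but does not deliver.
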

Setting $X=I$ in in Proposition \ref{P1} again, we obtain the following result.
\begin{cor}\label{C3}
Let $f, g\in \mathfrak{H}$ and $A,B\in\mathbb{M}_n$ be $G_{1}$ matrices such that $\sigma (A)\cup \sigma (B)\subset\mathbb{D}$. Then 
$
\left\vert \left\vert \left\vert f(A)g(B)+I\right\vert \right\vert
\right\vert \leq  \frac{2\sqrt{2}}{d_{A}d_{B}} \left\vert\left\vert \left\vert\,|AB|+I\,\right\vert \right\vert \right\vert
$ for every unitarily invariant norm $\left\vert \left\vert \left\vert
\cdot \right\vert \right\vert \right\vert. $
\end{cor}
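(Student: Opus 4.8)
The plan is to obtain this statement as a direct specialization of Proposition \ref{P1}, since the only genuine content is the substitution $X=I$. First I would observe that the hypotheses of Corollary \ref{C3} are exactly those of Proposition \ref{P1} read in the finite-dimensional setting: the matrices $A,B\in\mathbb{M}_n$ are $G_1$ operators on the Hilbert space $\mathbb{C}^n$, their spectra lie in $\mathbb{D}$, and $f,g\in\mathfrak{H}$, so the Herglotz representation driving the proof of Proposition \ref{P1} is available verbatim. The passage to matrices is not merely cosmetic: on an infinite-dimensional $\mathcal{H}$ the identity $I$ is not compact and need not lie in the ideal $\mathcal{C}_{|||\cdot|||}$, so $|||I|||$ could fail to be finite. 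Over $\mathbb{M}_n$ every operator is finite rank, hence lies in every norm ideal, and all the unitarily invariant norms appearing below are automatically finite. This is the one point I would flag explicitly.

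Next I would apply Proposition \ref{P1} with the choice $X=I$. On the left-hand side, $f(A)\,I\,g(B)+I = f(A)g(B)+I$. On the right-hand side, $|AXB| = |A\,I\,B| = |AB|$, while $|X| = |I| = I$ because $I$ is positive. Substituting these into the estimate
$$
\left\vert \left\vert \left\vert f(A)Xg(B)+X\right\vert \right\vert \right\vert \leq \frac{2\sqrt{2}}{d_{A}d_{B}} \left\vert\left\vert \left\vert\,|AXB|+|X|\,\right\vert \right\vert \right\vert
$$
yields
$$
\left\vert \left\vert \left\vert f(A)g(B)+I\right\vert \right\vert \right\vert \leq \frac{2\sqrt{2}}{d_{A}d_{B}} \left\vert\left\vert \left\vert\,|AB|+I\,\right\vert \right\vert \right\vert,
$$
which is the desired inequality. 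I would also note that $d_A,d_B>0$, since each spectrum is a finite subset of the open disk $\mathbb{D}$ and therefore sits at positive distance from $\partial\mathbb{D}$, so the constant $\frac{2\sqrt{2}}{d_A d_B}$ is well defined.

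There is essentially no obstacle here beyond bookkeeping: the entire difficulty has already been discharged inside Proposition \ref{P1}. The only thing that requires genuine care is ensuring the substituted operator $I$ lies in the relevant norm ideal, which is precisely why the statement is phrased for $\mathbb{M}_n$ rather than for a general infinite-dimensional $\mathcal{B(H)}$; once that restriction is acknowledged, the conclusion is immediate.
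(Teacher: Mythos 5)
Your proof is correct and takes essentially the same route as the paper, which obtains Corollary \ref{C3} precisely by setting $X=I$ in Proposition \ref{P1}. Your extra remarks—that finite dimensionality guarantees $|||I|||<\infty$ and that $d_A,d_B>0$ since the spectra are finite subsets of $\mathbb{D}$—merely make explicit bookkeeping that the paper leaves implicit.
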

\begin{cor}\label{lem1}
If $A\in \mathcal{B}(\mathcal{H})$ is self-adjoint and $f$ is a continuous complex function on $\sigma(A)$, then $f(UAU^*)=Uf(A)U^*$ for all unitaries $U$.
\end{cor}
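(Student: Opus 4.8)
The plan is to reduce the claim to polynomials and then pass to the limit, using the Stone--Weierstrass (Weierstrass) approximation theorem together with the isometry property of the continuous functional calculus. Unitary conjugation $T\mapsto UTU^*$ is a $*$-automorphism of $\mathcal{B}(\mathcal{H})$, and the functional calculus $h\mapsto h(A)$ is the natural $*$-homomorphism from $C(\sigma(A))$; the proof amounts to showing these two operations commute.

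First I would record the setup. Since $A$ is self-adjoint, $\sigma(A)$ is a compact subset of $\mathbb{R}$, so every continuous complex function on $\sigma(A)$ is a uniform limit of polynomials. For any unitary $U$ the operator $UAU^*$ is again self-adjoint, and unitary conjugation preserves the spectrum, so $\sigma(UAU^*)=\sigma(A)$. Consequently $f(UAU^*)$ is defined by the \emph{same} functional calculus on the \emph{same} compact set $\sigma(A)$ as $f(A)$, which is what lets me compare the two sides.

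Next I would verify the identity for polynomials. For a monomial $z^n$ one has $(UAU^*)^n=UA^nU^*$, because the interior factors $U^*U$ telescope to the identity; by linearity this gives $p(UAU^*)=U\,p(A)\,U^*$ for every polynomial $p$. Invoking the Weierstrass theorem, I choose polynomials $p_k\to f$ uniformly on $\sigma(A)$.

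Finally I would pass to the limit. The continuous functional calculus is isometric, $\|h(A)\|=\sup_{t\in\sigma(A)}|h(t)|$, so $p_k(A)\to f(A)$ in operator norm, and likewise $p_k(UAU^*)\to f(UAU^*)$ since $\sigma(UAU^*)=\sigma(A)$. As conjugation by $U$ is an isometry of $\mathcal{B}(\mathcal{H})$, it is norm-continuous, whence $U\,p_k(A)\,U^*\to U f(A)U^*$. Combining these limits with the polynomial identity yields $f(UAU^*)=\lim_k p_k(UAU^*)=\lim_k U\,p_k(A)\,U^*=U f(A)U^*$. There is no genuine obstacle here; the only point demanding care is ensuring that the approximating polynomials and all three limits are taken over the common spectrum $\sigma(UAU^*)=\sigma(A)$, which is precisely what makes both functional calculi refer to the same uniform limit. (Alternatively, one could argue by the uniqueness of the unital $*$-homomorphism on $C(\sigma(A))$ sending the identity function to the given operator, applied to $f\mapsto U f(A)U^*$, but the approximation argument is more self-contained.)
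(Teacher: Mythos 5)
Your proof is correct and follows essentially the same route as the paper's: approximate $f$ uniformly on $\sigma(A)$ by polynomials via Stone--Weierstrass, use $p(UAU^*)=Up(A)U^*$ for polynomials, and pass to the norm limit, noting $\sigma(UAU^*)=\sigma(A)$. Your version merely spells out the details (the telescoping of $U^*U$, the isometry of the functional calculus, and the norm-continuity of conjugation) that the paper leaves implicit.
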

\begin{proof}
By the Stone-Weierstrass theorem, there is a sequence $(p_n)$ of polynomials uniformly converging to $f$ on $\sigma(A)$. Hence,
$$f(UAU^*)=\lim_np_n(UAU^*)=U(\lim_np_n(A))U^*=Uf(A)U^*.$$
We note that $\sigma(UAU^*)=\sigma(A)$.
\end{proof}
We conclude this section by presenting some inequalities involving the Hilbert-Schmidt norm $\|\cdot\|_2.$
\begin{theorem}\label{hilb}
Let $A,B\in\mathbb{M}_n$ be Hermitian matrices satisfying $\sigma(A)\cup \sigma(B)\subset \mathbb{D}$ and let $f, g\in \mathfrak{H}$. Then
$
\|f(A)X\pm Xg(B)\|_2\leq \left\|\frac{X+|A|X}{d_A}+\frac{X+X|B|}{d_B}\right\|_2.
$
\end{theorem}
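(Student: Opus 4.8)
The plan is to exploit the special structure of the Hilbert-Schmidt norm, which, unlike a general unitarily invariant norm, is induced by an inner product and is therefore computed coordinatewise in any orthonormal basis of $\mathbb{M}_n$. Since $A$ and $B$ are Hermitian, I would first diagonalize them: write $A=\sum_i\lambda_i u_iu_i^*$ and $B=\sum_j\rho_j v_jv_j^*$ with $\{u_i\}$, $\{v_j\}$ orthonormal eigenbases and $\lambda_i,\rho_j\in(-1,1)$ real, because $\sigma(A)\cup\sigma(B)\subset\mathbb{D}\cap\mathbb{R}$. The family $\{u_iv_j^*\}_{i,j}$ is then an orthonormal basis of $\mathbb{M}_n$ for the Hilbert-Schmidt inner product $\langle M,N\rangle=\operatorname{tr}(M^*N)$.

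Next I would expand $X=\sum_{ij}x_{ij}\,u_iv_j^*$ with $x_{ij}=\langle u_i,Xv_j\rangle$ and record how the two sides act on these basis matrices. Because $f(A)u_i=f(\lambda_i)u_i$ and $v_j^*g(B)=g(\rho_j)v_j^*$, both sides become diagonal Schur multipliers in this basis:
$$f(A)X\pm Xg(B)=\sum_{ij}\bigl(f(\lambda_i)\pm g(\rho_j)\bigr)x_{ij}\,u_iv_j^*,$$
while $\tfrac{X+|A|X}{d_A}+\tfrac{X+X|B|}{d_B}=\sum_{ij}\bigl(\tfrac{1+|\lambda_i|}{d_A}+\tfrac{1+|\rho_j|}{d_B}\bigr)x_{ij}\,u_iv_j^*$, using that $|A|$ and $|B|$ have eigenvalues $|\lambda_i|$ and $|\rho_j|$ on the same eigenvectors. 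Orthonormality turns each Hilbert-Schmidt norm into a weighted sum of the $|x_{ij}|^2$, so the inequality reduces to the purely scalar estimate
$$\bigl|f(\lambda_i)\pm g(\rho_j)\bigr|\le \frac{1+|\lambda_i|}{d_A}+\frac{1+|\rho_j|}{d_B}\qquad\text{for all }i,j.$$

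To establish this scalar bound I would apply the triangle inequality $|f(\lambda_i)\pm g(\rho_j)|\le|f(\lambda_i)|+|g(\rho_j)|$, which makes the estimate independent of the sign, and treat each term by the Herglotz representation \eqref{6}. Writing $f(\lambda_i)=\int_0^{2\pi}\frac{e^{i\alpha}+\lambda_i}{e^{i\alpha}-\lambda_i}\,d\mu(\alpha)$ and using $|e^{i\alpha}+\lambda_i|\le 1+|\lambda_i|$ together with $|e^{i\alpha}-\lambda_i|\ge\operatorname{dist}(\partial\mathbb{D},\sigma(A))=d_A$, valid because $e^{i\alpha}\in\partial\mathbb{D}$ and $\lambda_i\in\sigma(A)$, the unit total mass of $\mu$ gives $|f(\lambda_i)|\le\frac{1+|\lambda_i|}{d_A}$; symmetrically $|g(\rho_j)|\le\frac{1+|\rho_j|}{d_B}$ via the measure $\nu$. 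Summing the squared scalar inequalities against $|x_{ij}|^2$ and taking square roots yields the claim.

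I expect the only genuinely conceptual step to be the first: recognizing that passing to the joint eigenbasis simultaneously diagonalizes left-multiplication by $f(A)$ and right-multiplication by $g(B)$, so that the Hilbert-Schmidt comparison collapses to an entrywise scalar inequality. This is exactly what fails for a general unitarily invariant norm and explains why the theorem is stated only for $\|\cdot\|_2$; once the reduction is in place, the residual Herglotz and $G_1$ estimate is routine and parallels the bound \eqref{s2} already used in Proposition \ref{P1}.
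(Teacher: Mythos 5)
Your proof is correct and takes essentially the same route as the paper: diagonalize the Hermitian matrices $A=UD(\lambda_j)U^*$, $B=VD(\mu_k)V^*$, pass to $Y=U^*XV$ so that both sides become diagonal Schur multipliers, and reduce the Hilbert--Schmidt comparison to the entrywise scalar bound $|f(\lambda_j)\pm g(\mu_k)|\le\frac{1+|\lambda_j|}{d_A}+\frac{1+|\mu_k|}{d_B}$. The only difference is that the paper imports this scalar estimate from \cite{L-M-R} (recording just $|e^{i\alpha}-\lambda_j|\ge d_A$, $|e^{i\beta}-\mu_k|\ge d_B$), whereas you derive it explicitly from the Herglotz representation with unit total mass, making the argument self-contained.
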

\begin{proof}
Let $A=UD(\nu_j)U^*$ and $B=VD(\mu_k)V^*$ be the spectral decomposition of $A$ and $B$ and let $Y=U^*XV:=[y_{jk}].$ Noting that $|e^{i\alpha}-\lambda_j|\geq d_A$ and $|e^{i\beta}-\mu_k|\geq d_B,$ we have from \cite{L-M-R} that
\begin{align*}
\|f(A)X\pm Xg(B)\|_2^2&=\sum_{j,k}|f(\lambda_j)\pm g(\mu_k)|^2|y_{jk}|^2\\&\leq\sum_{j,k}\left(\frac{1+|\lambda_j|}{d_A}+\frac{1+|\mu_k|}{d_B}\right)^2|y_{jk}|^2\\
&=\left\|\frac{X+|A|X}{d_A}+\frac{X+X|B|}{d_B}\right\|_2^2,
\end{align*}
which completes the proof.
\end{proof}

\section{Operator valued functions}
In this section, we present some results on operator valued
 functions.
From \cite{Con}, if $(\Omega,\mathcal{M,}\mu)$ is a measure
space, for a $\sigma$-finite measure  $\mu$ on $\mathcal{M}$,
the mapping $\mathcal{A}:\Omega\rightarrow \mathcal{B(H)}$ will be called $[\mu]$ weakly$^{*}$-measurable if  a scalar
valued function $t \rightarrow tr (A_{t} Y)$ is measurable for any
$Y\in\mathcal{C}_{1}(\mathcal{H})$. Moreover, if  all these functions are in $\L^{1}(\Omega, \mu)$, then
since $\mathcal{B(H)}$ is the dual space of $\mathcal{C}_{1}(\mathcal{H})$, for
any $E\in \mathcal{M}$  we have the unique operator $I_{E}\in \mathcal{B(H)}$,
called the Gel'fand  or weak $^*$-integral of
$\mathcal{A}$ over $E$, such that
\begin{equation}
tr(\mathcal{I}_{E} Y)=\int_E tr(A_{t} Y)dt \textrm{\qquad for all
$Y\in \mathcal{C_{1}(H)}$.} \label{geljfandovintegral}
\end{equation}
We  denote it  by $\int_EA_{t}d\mu(t)$ or $\int_E A d\mu.$ We consider the following important aspect.
\begin{prop}\label{P2}

 $A:\Omega \rightarrow \mathcal{B(H)}$ is $[\mu]$
 if and only if
 scalar valued functions $t \rightarrow \langle A_{t} f,f\rangle$ are  $[\mu]$ measurable (resp. integrable) for every
$f\in \mathcal{H}$.
\end{prop}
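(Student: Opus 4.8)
The plan is to translate the weak$^*$-measurability condition, which is phrased in terms of all trace pairings $t\mapsto\operatorname{tr}(A_tY)$ with $Y\in\mathcal{C}_1(\mathcal{H})$, into the much smaller family of diagonal pairings $t\mapsto\langle A_tf,f\rangle$, and then to recover the full family from the diagonal one. The bridge is the elementary identity
\[
\operatorname{tr}\bigl(A_t(g^*\otimes f)\bigr)=\langle A_tf,g\rangle,\qquad f,g\in\mathcal{H},
\]
which follows at once from $(g^*\otimes f)h=\langle h,g\rangle f$ together with the fact that $g^*\otimes f$ is a rank one operator, hence lies in $\mathcal{C}_1(\mathcal{H})$. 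Taking $g=f$ gives $\langle A_tf,f\rangle=\operatorname{tr}\bigl(A_t(f^*\otimes f)\bigr)$, so both forward implications are immediate: if every $t\mapsto\operatorname{tr}(A_tY)$ is measurable (resp. lies in $L^1(\Omega,\mu)$), then in particular so is every diagonal function $t\mapsto\langle A_tf,f\rangle$.

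For the converse I would first remove the restriction to diagonal pairings by polarization,
\[
\langle A_tf,g\rangle=\tfrac14\sum_{k=0}^{3}i^{k}\bigl\langle A_t(f+i^{k}g),\,f+i^{k}g\bigr\rangle,
\]
which exhibits $t\mapsto\langle A_tf,g\rangle$ as a finite linear combination of functions of the hypothesised type, hence measurable (resp. integrable) for all $f,g$. By the identity above and linearity, $t\mapsto\operatorname{tr}(A_tY)$ is then measurable (resp. integrable) for every finite rank $Y$. To pass to an arbitrary $Y\in\mathcal{C}_1(\mathcal{H})$ I would invoke the density of the finite rank operators in $\mathcal{C}_1(\mathcal{H})$ recorded in Section 2: choosing finite rank $Y_n\to Y$ in $\|\cdot\|_1$, the estimate $|\operatorname{tr}(A_t(Y-Y_n))|\le\|A_t\|\,\|Y-Y_n\|_1$ shows that $\operatorname{tr}(A_tY_n)\to\operatorname{tr}(A_tY)$ pointwise in $t$ (each $A_t$ being bounded), so $t\mapsto\operatorname{tr}(A_tY)$ is a pointwise limit of measurable functions and hence measurable. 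This settles the measurability equivalence.

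The integrability converse is where the real work lies, since pointwise limits do not preserve $L^1$-membership, and the plan here is to upgrade the pointwise integrability of each $t\mapsto\langle A_tf,g\rangle$ to a uniform estimate $\|\langle A_{\cdot}f,g\rangle\|_1\le C\|f\|\,\|g\|$. Fixing $g$, the map $f\mapsto[t\mapsto\langle A_tf,g\rangle]$ from $\mathcal{H}$ into $L^1(\Omega,\mu)$ has closed graph: if $f_n\to f$ and the images converge in $L^1$ to some $h$, then the pointwise convergence $\langle A_tf_n,g\rangle\to\langle A_tf,g\rangle$ combined with a.e.\ convergence along a subsequence forces $h(t)=\langle A_tf,g\rangle$, so by the closed graph theorem the map is bounded; the same argument in the second slot makes the sesquilinear map $(f,g)\mapsto[t\mapsto\langle A_tf,g\rangle]$ separately continuous, whence jointly bounded by the uniform boundedness principle. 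I would then expand $Y\in\mathcal{C}_1(\mathcal{H})$ in its Schmidt form $Y=\sum_n s_n(Y)\,e_n^{*}\otimes\phi_n$ with $\{e_n\},\{\phi_n\}$ orthonormal and $\sum_n s_n(Y)=\|Y\|_1$, so that $\operatorname{tr}(A_tY)=\sum_n s_n(Y)\langle A_t\phi_n,e_n\rangle$ for each fixed $t$, and apply Tonelli:
\[
\int_\Omega\sum_n s_n(Y)\,\bigl|\langle A_t\phi_n,e_n\rangle\bigr|\,d\mu(t)=\sum_n s_n(Y)\int_\Omega\bigl|\langle A_t\phi_n,e_n\rangle\bigr|\,d\mu(t)\le C\sum_n s_n(Y)=C\|Y\|_1<\infty.
\]
Thus $|\operatorname{tr}(A_tY)|$ is dominated by an integrable function and, being measurable by the previous step, lies in $L^1(\Omega,\mu)$.

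The main obstacle is precisely this integrability step: the equivalence for mere measurability is soft, following from polarization and a pointwise density argument, whereas obtaining $L^1$-control of every trace-class pairing from the diagonal hypotheses requires manufacturing the uniform constant $C$ via the closed graph and uniform boundedness theorems and then summing against the singular values through Tonelli. One should also check the minor point that the Schmidt series converges in $\mathcal{C}_1(\mathcal{H})$, so that the bounded functional $\operatorname{tr}(A_t\,\cdot\,)$ may legitimately be applied term by term for each fixed $t$.
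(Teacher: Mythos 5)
Your proposal is correct, and its forward direction is exactly the paper's: both rest on the rank-one identity $\operatorname{tr}\bigl(A_t(f^*\otimes f)\bigr)=\operatorname{tr}\bigl(f^*\otimes A_tf\bigr)=\langle A_tf,f\rangle$, so that weak${}^*$-measurability (resp.\ integrability) of $A$ immediately yields the diagonal conclusion. The divergence is in the converse: the paper proves nothing there, writing only that ``the converse follows immediately from \cite{jockosklocko},'' whereas you supply a complete, self-contained argument. Your measurability step (polarization to recover $\langle A_tf,g\rangle$, linearity over finite-rank $Y$, then $\|\cdot\|_1$-density of finite ranks together with the pointwise estimate $|\operatorname{tr}(A_t(Y-Y_n))|\le\|A_t\|\,\|Y-Y_n\|_1$) and, more substantially, your integrability step (closed graph theorem to make each $f\mapsto\langle A_\cdot f,g\rangle$ bounded into $L^1(\Omega,\mu)$, uniform boundedness to produce the joint bound $\|\langle A_\cdot f,g\rangle\|_{L^1}\le C\|f\|\,\|g\|$, then the Schmidt expansion $Y=\sum_n s_n(Y)\,e_n^*\otimes\phi_n$ and Tonelli) are precisely the content the paper delegates to the citation, and each step checks out: the closed graph hypothesis is verified correctly via pointwise convergence plus a.e.\ convergence of an $L^1$-convergent subsequence, and you rightly note that $\mathcal{C}_1$-convergence of the Schmidt series legitimizes applying the bounded functional $\operatorname{tr}(A_t\,\cdot\,)$ term by term for each fixed $t$. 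What your route buys is transparency about where the real analysis lives --- measurability is soft, while the $L^1$ direction genuinely requires manufacturing a uniform constant before summing against the singular values; what the paper's route buys is only brevity. One cosmetic remark: the statement as printed is garbled (``$A$ is $[\mu]$'' omits ``weak${}^*$-measurable (resp.\ integrable)''), and you correctly reconstructed the intended assertion from the definition preceding it.
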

\begin{proof}
Every one dimensional operator $f^{*} \otimes f$ is in $C_{1}(H)$ and there
holds $$tr(A_{t}( f^{*} \otimes f))=tr(f^{*} \otimes A_{t} f)=\left<A_{t} f,f\right>$$
so that $[\mu]$ weak $^*$-measurability (resp. $[\mu]$ weak
$^*$-integrability) of $A$ directly implies measurability
(resp. integrability) of $\left<A_{t} f,f\right>$ for any
$f\in \mathcal{H}$.
The converse follows immediately from \cite{jockosklocko}  and this completes the proof.
\end{proof}

We note that in view of Proposition \ref{P2},
the  Equation \ref{geljfandovintegral} of Gel'fand integral
for o.v. functions can be reformulated as follows \cite{joc09i}:

\begin{prop}\label{novadefinicionalema}

If  $\left<A f,f\right>\in L^1(E,\mu)$ for all $f\in \mathcal{H}$, for
some $E\in \mathcal{M}$ and a $\mathcal{B(H)}$-valued function $A$ on $E$, then
the mapping $f\rightarrow\int_E  \left< A_{t} f,f\right>d\mu(t)$ represents a
quadratic form of  bounded operator
$\int_E A dm$ or  $\int_E A_t d\mu(t)$,
 satisfying the following
$
  \left<\left(\int_E A_t d\mu (t)\right) f,g\right>=
 \int_E \left< At f,g\right>\,d\mu (t), for\;\; all\;\;  f,g\in \mathcal{H}.$

\end{prop}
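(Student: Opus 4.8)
The plan is to obtain the identity directly from Proposition~\ref{P2} and the defining relation \eqref{geljfandovintegral} of the Gel'fand integral, by testing that relation against one dimensional operators.

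First I would apply Proposition~\ref{P2}. The hypothesis that $t\mapsto\langle A_tf,f\rangle$ belongs to $L^1(E,\mu)$ for every $f\in\mathcal H$ is exactly the integrability condition appearing there, so $A$ is weakly$^*$-integrable over $E$ and the Gel'fand integral exists: there is a bounded operator $T:=\int_E A_t\,d\mu(t)\in\mathcal B(\mathcal H)$ with $tr(TY)=\int_E tr(A_tY)\,d\mu(t)$ for all $Y\in\mathcal C_1(\mathcal H)$. I would emphasise that the boundedness of $T$ is not deduced from the finiteness of the quadratic form alone (an everywhere finite quadratic form need not come from a bounded operator); it is guaranteed by the construction of the Gel'fand integral as an element of $\mathcal B(\mathcal H)=\mathcal C_1(\mathcal H)^{*}$.

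Before extracting the formula I would pass from the diagonal hypothesis to the full sesquilinear form by polarization: since $\langle A_tf,g\rangle=\tfrac14\sum_{k=0}^{3} i^{k}\langle A_t(f+i^{k}g),\,f+i^{k}g\rangle$ and each term lies in $L^1(E,\mu)$ by assumption, the map $t\mapsto\langle A_tf,g\rangle$ is integrable for every $f,g\in\mathcal H$, so the right-hand side of the claimed equality is meaningful. Then, specialising \eqref{geljfandovintegral} to the rank one operator $Y=g^{*}\otimes f\in\mathcal C_1(\mathcal H)$ and using $(g^{*}\otimes f)h=\langle h,g\rangle f$, one finds $T(g^{*}\otimes f)=g^{*}\otimes(Tf)$ together with the elementary trace identity $tr(g^{*}\otimes u)=\langle u,g\rangle$; hence $tr\bigl(T(g^{*}\otimes f)\bigr)=\langle Tf,g\rangle$ and $tr\bigl(A_t(g^{*}\otimes f)\bigr)=\langle A_tf,g\rangle$. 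Inserting these into \eqref{geljfandovintegral} gives $\langle Tf,g\rangle=\int_E\langle A_tf,g\rangle\,d\mu(t)$ for all $f,g\in\mathcal H$, which is the asserted identity, and the choice $g=f$ shows that $T$ represents the given quadratic form. I expect the only genuinely delicate point to be the justification that evaluating \eqref{geljfandovintegral} on the one dimensional operators $g^{*}\otimes f$ is enough to determine $T$ completely; the remaining measurability and integrability issues are precisely those already resolved by Proposition~\ref{P2}, and the two trace computations are routine.
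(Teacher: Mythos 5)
Your proof is correct, but it takes a genuinely different route from the paper's. The paper never passes through trace duality: it shows directly that $\Phi_E(f,g)=\int_E\langle A_tf,g\rangle\,d\mu(t)$ is a bounded sesquilinear form, via the estimate $|\Phi_E(f,g)|\le\|\langle A_\cdot f,g\rangle\|_{L^1}\le M\|f\|\,\|g\|$ (the crucial bound being cited to \cite{JOC}), and then obtains the bounded operator from the Riesz representation theorem for bounded sesquilinear forms, the displayed identity being the definition of that operator. You instead invoke Proposition \ref{P2} to upgrade the quadratic-form hypothesis to weak${}^*$-integrability, take $T=\int_E A_t\,d\mu(t)$ as the Gel'fand integral in $\mathcal{C}_1(\mathcal{H})^*=\mathcal{B}(\mathcal{H})$, and read off the identity by evaluating \eqref{geljfandovintegral} at the rank-one operators $Y=g^*\otimes f$; your trace computations $tr\bigl(T(g^*\otimes f)\bigr)=\langle Tf,g\rangle$ and $tr\bigl(A_t(g^*\otimes f)\bigr)=\langle A_tf,g\rangle$ are correct, and the polarization step, which the paper leaves entirely tacit, is a genuine improvement, since it is what makes $t\mapsto\langle A_tf,g\rangle$ integrable and the right-hand side of the identity meaningful. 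Two remarks. First, your closing worry is unfounded: no density or determination argument is needed, because each single instance of \eqref{geljfandovintegral} at $Y=g^*\otimes f$ already yields the matrix element $\langle Tf,g\rangle$ directly, and letting $f,g$ range over $\mathcal{H}$ pins down $T$. Second, be aware that both routes bury the same nontrivial analytic point in a citation, namely the uniform bound converting the pointwise $L^1$ hypotheses into control of the form $M\|f\|\,\|g\|$ (a closed-graph or uniform-boundedness argument): the paper hides it in the appeal to \cite{JOC}, while you hide it in the converse half of Proposition \ref{P2} (itself deferred to \cite{jockosklocko}) together with the assertion that the Gel'fand functional is a bounded element of $\mathcal{C}_1(\mathcal{H})^*$. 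On balance your argument is the more explicit and careful of the two, at the price of depending on Proposition \ref{P2}, whereas the paper's sesquilinear-form argument is shorter and works straight from the hypothesis modulo the cited estimate.
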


\begin{proof}

It suffices to show that for all $E\in \mathcal{M},$
$
\Phi_E(f,g)=\int_E \left<A_{t} f,g\right>\,d\mu (t),$
for all $f, g\in \mathcal{H}$,
   defines a    bounded  sesquilinear  functional $\Phi$ on $\mathcal{H}$.
Indeed, by \cite{JOC} we have,
$
| \Phi_E(f,g)| \le \int_E|\left<A_{t} f,g\right>|\,d\mu
(t)
    \le \| A_{t} f,g\|_{L^1}
\le M \|f\|\|g\|
$
 for all $f,g\in \mathcal{H}$  since integration is a contractive functional on
$\L^{1}(\Omega ,\mu)$). This completes the proof.
\end{proof}
\begin{remark}
It is known from \cite{JOC} that
for a $[\mu]$
     $A:\Omega \rightarrow \mathcal{B(H)}$  we have that $A^*A$ is Gel'fand
integrable if and only if $ \int_\Omega \|A_t f\|^2d\mu
(t)< \infty,$ for all
$f\in \mathcal{H}$. Moreover,
for a $[\mu]$
 function
     $A:\Omega \rightarrow \mathcal{B(H)}$ let us consider a linear transformation $\vec{A}:D_{\vec{A}}\rightarrow L^{2}(\Omega,\mu , \mathcal{H})$,
with the domain $D_{\vec{A}}=\{ f\in \mathcal{H} \,| \,
  \int_\Omega \|A_t f\|^2 d\mu
(t)<\infty\}$, defined by
$
 ({\vec{A}}f)(t)=A_t f .$  and all $f\in
 D_{\vec{A}}.$
 \end{remark}
In the next section, we devote our efforts to results on inner product
type integral transformers in terms of Landau, Cauchy-Schwarz
 and Gr\"uss type norm inequalities.

\section{  Norm inequalities}
 In this section, we consider various types of norm inequalities for inner product
type integral transformers  discussed in  \cite{jockosklocko}, \cite{JOC},  \cite{joc09i} and \cite{L-M-R}.
 From \cite{JOC}, a sufficient condition is
provided when  $A^*$ and $B$ from Definition \ref{def2} are both in
$L^2_G(\Omega,d\mu, \mathcal{B(H)}).$ If each of families  $(A_t)_{t\in\Omega}$
and  $(B_t)_{t\in\Omega}$ consists of commuting normal operators,
then by Theorem 3.2 in \cite{JOC} the i.p.t.i   transformer $\int_\Omega
A_t \otimes B_t d\mu(t)$ leaves every u.i. norm ideal
$\mathcal{C}_{|\|\cdot|\|}(\mathcal{H})$ invariant and the following
Cauchy-Schwarz inequality holds:
\begin{equation}
\left|\left\| \int_\Omega A_t X B_t d\mu(t)     \right|\right\|\le \left|\left\| \sqrt{\int_\Omega
A_t^* A_t }     d\mu(t)
     \sqrt{\int_\Omega B_t^*B_t d\mu(t)}\right|\right\|,
\end{equation}
for all $X\in \mathcal{C}_{|\|\cdot|\|}(\mathcal{H})$.
Normality and commutativity condition can be dropped for Schatten
$p$-norms as shown in Theorem 3.3 in \cite{JOC}. In Theorem 3.1 in
\cite{joc09i}, a formula for the exact norm of the i.p.t.i transformer
$\int_\Omega A_t \otimes B_td\mu(t)$ acting on $\mathcal{C}_2(\mathcal{H})$ is
found. In Theorem 2.1 in \cite{joc09i}  the exact norm of the i.p.t.i
transformer $\int_\Omega A_t^* \otimes A_t d\mu(t)$ is given for two
specific cases:
\begin{equation}
\left\| \int_\Omega A_t^*\otimes A_t d\mu(t)
\right\|_{B(H)\to\mathcal{C}_{\Phi}(\mathcal{H})}= \left\| \int_\Omega A_t^*A_t d\mu(t)
\right\|_{\mathcal{C}_\Phi(\mathcal{H})}, \label{bhubiloshta}
\end{equation}
\begin{equation}
\left\| \int_\Omega A_t^*\otimes A_t d\mu(t)
\right\|_{\mathcal{C}_{\Phi}(\mathcal{H})\to\mathcal{C}_1(\mathcal{H})}= \left\| \int_\Omega
A_t A_t^* d\mu(t)    \right \|_{\mathcal{C}_{\Phi_*}(\mathcal{H})},
\nonumber
\end{equation}
where $\Phi_*$ stands for a s.g. function related to the dual space
$(\mathcal{C}_{\Phi}(\mathcal{H}))^*$.
The norm appearing in (\ref{bhubiloshta})
and its associated space $L_G^2(\Omega,d\mu,\mathcal{B(H)},\mathcal{C}_\Phi(\mathcal{H}))$
present only a  special case of norming a field
$A=(A_t)_{t\in\Omega}$. A much wider class of norms $ \|
\cdot\|_{\Phi,\Psi}$ and their associated spaces
$L_G^2(\Omega,d\mu,\mathcal{B(H)},\mathcal{C}_\Phi(\mathcal{H}))$ are
 given in
\cite{joc09i} by
\begin{equation}
  \| A\|_{\Phi,\Psi}=
\left\|\int_\Omega A_t^*\otimes A_t d\mu(t)
 \right\|_{B(\mathcal{C}_\Phi(\mathcal{H}),\mathcal{C}_\Psi(\mathcal{H}))}^\frac12
\end{equation}
for an arbitrary pair of s.g. functions $\Phi$ and $\Psi$. For the
proof of completeness of the space
 $L_G^2(\Omega,d\mu,\mathcal{C}_\Phi(\mathcal{H}, \mathcal{C}_\Psi(\mathcal{H}))$see Theorem 2.2 in
\cite{joc09i}.
Before going into the details of this section lets consider the
following proposition from \cite{L-M-R} which will be useful in the sequel.
We give its proof for completion.
\begin{prop}
\label{optlema} Let $\mu$ be a probability measure  on $\Omega$, then
for every field $(\mathscr{A}_t)_{t\in\Omega}$ in
$L^2(\Omega,\mu,\mathcal{B}(\mathcal{H}))$, for all
$B\in\mathcal{B}(\mathcal{H})$, for all unitarily invariant norms
$|\|\cdot|\|$ and for all $\theta>0$,
\begin{eqnarray}
\lefteqn{ \int_\Omega\left|\mathscr{A}_t-B\right|^2 d\mu(t)  =
 \int_\Omega\left|\mathscr{A}_t-\int_\Omega A_t d\mu(t)\right|^2 d\mu(t)
+\left| \int_\Omega A_t d\mu(t)-B\right|^2}\label{nulto}\\
 &\ge& \int_\Omega\left|\mathscr{A}_t-\int_\Omega A_t
d\mu(t)\right|^2 d\mu(t) =\int_\Omega|\mathscr{A}_t|^2
d\mu(t)-\left|\int_\Omega A_t d\mu(t)\right|^2; \label{prvo}
\end{eqnarray}
\begin{eqnarray}
 \min_{B\in\mathcal{B}(\mathcal{H})}\left|\left\|\left|\int_\Omega\left|\mathscr{A}_t-B\right|^2 d\mu(t)|\right|^\theta\right|\right\| &=&  \left|\left\| \left|\int_\Omega\left|\mathscr{A}_t-\int_\Omega A_t
d\mu(t)\right|^2 d\mu(t)\right|^\theta \right\|\right| \\
  &=& \left|\int_\Omega|\mathscr{A}_t|^2 d\mu(t)- \left|\int_\Omega A_t
d\mu(t)\right|^2\||^\theta|\right\|. \label{drugo}
\end{eqnarray}

\end{prop}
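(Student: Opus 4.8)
The plan is to treat this as the operator-valued analogue of the scalar bias--variance decomposition $\int|\mathscr{A}-c|^2\,d\mu=\operatorname{Var}(\mathscr{A})+|\bar{\mathscr{A}}-c|^2$, and then to read off the minimization statement \eqref{drugo} from a single operator identity together with singular-value monotonicity. Throughout write $\bar A:=\int_\Omega A_t\,d\mu(t)$ for the Gel'fand mean, which exists as a bounded operator by Proposition \ref{novadefinicionalema}; I treat $\mathscr{A}_t$ and $A_t$ as the same field, so $\int_\Omega\mathscr{A}_t\,d\mu(t)=\bar A$.

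First I would establish the central identity \eqref{nulto}. Fix $B$ and write $\mathscr{A}_t-B=(\mathscr{A}_t-\bar A)+(\bar A-B)$ pointwise in $t$, then expand the positive operator $|\mathscr{A}_t-B|^2=(\mathscr{A}_t-B)^*(\mathscr{A}_t-B)$ into four terms. The two diagonal terms are $|\mathscr{A}_t-\bar A|^2$ and the $t$-constant $|\bar A-B|^2$, while the two cross terms are $(\mathscr{A}_t-\bar A)^*(\bar A-B)$ and its adjoint. Integrating against $\mu$ and using linearity of the weak integral (Proposition \ref{novadefinicionalema}), each cross term factors as $\bigl(\int_\Omega(\mathscr{A}_t-\bar A)^*\,d\mu(t)\bigr)(\bar A-B)$; since $\mu(\Omega)=1$ the constant $\bar A$ integrates to itself, so $\int_\Omega(\mathscr{A}_t-\bar A)\,d\mu(t)=0$ and, commuting the integral with the adjoint (again justified by the sesquilinear-form definition in Proposition \ref{novadefinicionalema}), both cross terms vanish. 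This gives \eqref{nulto}. Here the hypothesis that $\mu$ is a probability measure is used in an essential way.

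From \eqref{nulto} the assertions in \eqref{prvo} are immediate: the summand $|\bar A-B|^2$ is positive, so discarding it decreases the right-hand side in the positive-semidefinite order, yielding the stated operator inequality; and expanding $|\mathscr{A}_t-\bar A|^2$ directly (equivalently, setting $B=0$) and using $\int_\Omega\mathscr{A}_t^*\,d\mu(t)=\bar A^*$ collapses the two cross terms against the square of the mean to produce the variance form $\int_\Omega|\mathscr{A}_t|^2\,d\mu(t)-|\bar A|^2$.

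For the minimization \eqref{drugo}, put $P:=\int_\Omega|\mathscr{A}_t-\bar A|^2\,d\mu(t)$ and $Q_B:=|\bar A-B|^2$, both positive, so that \eqref{nulto} reads $\int_\Omega|\mathscr{A}_t-B|^2\,d\mu(t)=P+Q_B$ with $0\le P\le P+Q_B$. The hard part, and the one step one must not oversimplify, is passing from this operator inequality to $|||\,(P+Q_B)^\theta\,|||\ge|||\,P^\theta\,|||$ for \emph{every} $\theta>0$: because $t\mapsto t^\theta$ is not operator monotone when $\theta>1$, I would avoid asserting $(P+Q_B)^\theta\ge P^\theta$ as operators and instead argue at the level of singular values. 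Since $P$ and $P+Q_B$ are positive with $\langle Px,x\rangle\le\langle(P+Q_B)x,x\rangle$ for all $x$, the Courant--Fischer min--max characterization of the singular values of a positive operator gives $s_n(P)\le s_n(P+Q_B)$ for every $n$; raising to the power $\theta>0$ preserves this, so $s_n(P^\theta)=s_n(P)^\theta\le s_n(P+Q_B)^\theta=s_n((P+Q_B)^\theta)$. Summing the first $k$ of these yields $\|P^\theta\|_{(k)}\le\|(P+Q_B)^\theta\|_{(k)}$ for all $k$, and the Ky Fan dominance theorem then delivers the inequality for every unitarily invariant norm. Equality holds exactly when $Q_B=0$, i.e. $B=\bar A$, so the infimum over $B$ is attained and equals $|||\,P^\theta\,|||$; substituting the variance form of $P$ from \eqref{prvo} gives the final member of \eqref{drugo}, completing the proof.
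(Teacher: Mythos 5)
Correct, and essentially the paper's own argument: you expand the square and use $\mu(\Omega)=1$ to kill the cross terms for \eqref{nulto} (and obtain the variance identity in \eqref{prvo} by direct expansion where the paper cites Lemma 2.1 of \cite{JOC}), and for \eqref{drugo} you pass from $0\le P\le P+Q_B$ to $|\|P^\theta|\|\le|\|(P+Q_B)^\theta|\|$ via singular-value monotonicity and Ky Fan dominance, which is precisely the fact ($0\le A\le B$ implies $s_n^\theta(A)\le s_n^\theta(B)$, hence $|\|A^\theta|\|\le|\|B^\theta|\|$) that the paper's terse proof invokes, with your explicit min--max justification filling in what the paper leaves implicit. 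One small overstatement: equality in $|\|P^\theta|\|\le|\|(P+Q_B)^\theta|\|$ need not force $Q_B=0$ (e.g.\ for the operator norm, $P=\mathrm{diag}(2,0)$ and $Q_B=\mathrm{diag}(0,1)$ give equality), but this is harmless, since the proposition only needs the minimum to be attained at $B=\int_\Omega A_t\,d\mu(t)$, not uniqueness of the minimizer.
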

Thus, the considered minimum is always obtained for
$B=\int_\Omega A_t d\mu(t)$.

\begin{proof}
The expression in (\ref{nulto}) is trivial.
Inequality in  (\ref{prvo}) follows from (\ref{nulto}), while
identity in \eqref{prvo} is just a
 a special case of  Lemma 2.1 in \cite{JOC} applied for $k=1$ and $\delta_1=\Omega$.

As $0\le A\le B$ for $A,B\in \mathcal{C_{\infty}(H)}$ implies $ s_n^\theta(A)\le
s_n^\theta(B)$ for all $n\in \mathbb{N}$, as well as $|\| A^\theta|\|\le
|\| B^\theta|\|,$ then (\ref{drugo}) follows.
\end{proof}
Let us recall that for a pair of random real variables $(Y,Z)$ its
coefficient  of correlation
$$\rho_{Y,Z}=\frac{| E(YZ)-E(Y)E(Z)|}{\sigma(Y)\sigma(Z)}=
             \frac{| E(YZ)-E(Y)E(Z)|}{
\sqrt{E(Y^2)-E^2(Y)} \sqrt{E(Z^2)-E^2(Z)}}$$ always satisfies
$|\rho_{Y,Z}|\le 1.$ For square integrable functions $f$ and $g$ on $[0,1]$ and
$D(f,g)=\int_0^1f(t)g(t)\,d t-
        \int_0^1f(t)\,d t\int_0^1g(t)\,d t.$ Landau proved that $ | D(f,g)|\le \sqrt{D(f,f)D(g,g)}.$\\
The  following next result represents a generalization of Landau inequality
in u.i. norm ideals \cite{joc09i} for Gel'fand integrals of o.v. functions
 with relative simplicity of its formulation.

\begin{theorem}
\label{normalnisluchaj} If $\mu$ is a probability measure on
$\Omega$, let both fields $(A_t)_{t\in\Omega}$ and
 $(B_t)_{t\in\Omega}$ be in $L^2(\Omega,\mu,\mathcal{B(H)})$
 consisting of commuting normal operators
and consider
$$\sqrt{\,\int_\Omega|A_{t}|^2 -\left|\int_\Omega A_{t} d\mu(t)\right|^2}X
\sqrt{\,\int_\Omega| B_{t}|^2 d\mu(t)-\left|\int_\Omega B_{t}
d\mu(t)\right|^2},$$ for some $X\in B(H)$. Then $$\int_\Omega
A_tX B_t d\mu(t)-\int_\Omega A_{t} dt X\!\!\int_\Omega B_{t} d\mu(t)
\in C_{|\|.|\|}(H).$$
\end{theorem}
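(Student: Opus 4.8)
The plan is to reduce the assertion to the Cauchy--Schwarz inequality for i.p.t.i.\ transformers stated at the beginning of this section (Theorem 3.2 in \cite{JOC}) by passing to the \emph{centred} fields. Abbreviate $\bar A=\int_\Omega A_t\,d\mu(t)$ and $\bar B=\int_\Omega B_t\,d\mu(t)$, and put $\widetilde A_t=A_t-\bar A$, $\widetilde B_t=B_t-\bar B$. The first step is the algebraic identity
$$\int_\Omega \widetilde A_t\,X\,\widetilde B_t\,d\mu(t)=\int_\Omega A_t X B_t\,d\mu(t)-\bar A\,X\,\bar B,$$
whose right-hand side is exactly the operator appearing in the conclusion. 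This is obtained by expanding the left-hand side into four integrals; since $\mu$ is a probability measure the constant factors $\bar A$ and $\bar B$ may be pulled out of the integral, and the two mixed terms together with the purely constant term collapse to $-\bar A\,X\,\bar B$.

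Next I would check that the centred fields satisfy the hypotheses under which the quoted Cauchy--Schwarz inequality is valid. That $(\widetilde A_t)_{t\in\Omega}$ and $(\widetilde B_t)_{t\in\Omega}$ remain in $L^2(\Omega,\mu,\mathcal B(\mathcal H))$ is immediate from the triangle inequality, boundedness of $\bar A,\bar B$ and finiteness of $\mu$. For the normality and commutativity, let $\mathcal M_A$ and $\mathcal M_B$ be the abelian von Neumann algebras generated by $(A_t)$ and $(B_t)$ respectively. Each is weak$^*$-closed, and the defining relation $\operatorname{tr}(\bar A\,Y)=\int_\Omega\operatorname{tr}(A_t\,Y)\,d\mu(t)$ shows that $\bar A$ annihilates the pre-annihilator of $\mathcal M_A$, so $\bar A\in\mathcal M_A$ and likewise $\bar B\in\mathcal M_B$. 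Hence $\widetilde A_t\in\mathcal M_A$ and $\widetilde B_t\in\mathcal M_B$ are once more families of commuting normal operators, and Theorem 3.2 in \cite{JOC} applies to them.

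Applying that Cauchy--Schwarz inequality to the centred fields gives
$$\left|\left\|\int_\Omega \widetilde A_t\,X\,\widetilde B_t\,d\mu(t)\right|\right\|\le\left|\left\|\sqrt{\int_\Omega|\widetilde A_t|^2\,d\mu(t)}\;X\;\sqrt{\int_\Omega|\widetilde B_t|^2\,d\mu(t)}\right|\right\|.$$
Now I invoke Proposition \ref{optlema}: the identity in \eqref{prvo}, applied to each field, yields $\int_\Omega|\widetilde A_t|^2\,d\mu(t)=\int_\Omega|A_t|^2\,d\mu(t)-|\bar A|^2$ and the analogous formula for $B$. Substituting these rewrites the right-hand side as the sandwich operator $\sqrt{\int_\Omega|A_t|^2\,d\mu(t)-|\bar A|^2}\,X\,\sqrt{\int_\Omega|B_t|^2\,d\mu(t)-|\bar B|^2}$ of the hypothesis. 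Since this operator is assumed to lie in $\mathcal C_{|\|\cdot|\|}(\mathcal H)$ its u.i.\ norm is finite, so by the displayed inequality the covariance operator also has finite u.i.\ norm, i.e.\ belongs to $\mathcal C_{|\|\cdot|\|}(\mathcal H)$, together with the matching norm bound.

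The step I expect to be the main obstacle is the structural one in the second paragraph: ensuring that the Gel'fand integrals $\bar A,\bar B$ genuinely lie inside the abelian von Neumann algebras generated by the respective fields, rather than being merely bounded operators, since the Cauchy--Schwarz inequality for i.p.t.i.\ transformers is only guaranteed under the ``commuting normal'' hypothesis. Once this is secured, the covariance identity and the variance rewriting are routine consequences of the propositions already established in this and the previous section.
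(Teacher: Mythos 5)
Your proposal is correct, but it takes a genuinely different route from the paper. The paper's proof is based on the Korkine-type identity \eqref{21}, which rewrites the covariance transformer as $\frac12\int_{\Omega^2}(A_s-A_t)X(B_s-B_t)\,d(\mu\times\mu)(s,t)$ over the product space $\Omega\times\Omega$, and then applies Theorem 3.2 of \cite{JOC} to the difference fields $(A_s-A_t)$ and $(B_s-B_t)$, whose membership in $L^2(\Omega^2,\mu\times\mu,\mathcal{B}(\mathcal{H}))$ and whose second moments are converted into the hypothesized variance operators by identity \eqref{23}. The payoff of that symmetrization is that the difference fields are \emph{manifestly} commuting and normal (normality of $A_s-A_t$ follows from commutativity of the normal family via Fuglede's theorem), so no structural claim about the Gel'fand means is ever needed. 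You instead center each field by its mean $\bar A=\int_\Omega A_t\,d\mu(t)$, which forces exactly the nonroutine step you correctly isolate: verifying that $\bar A$ lies in the abelian von Neumann algebra $\mathcal{M}_A$ generated by the field, so that $\widetilde A_t=A_t-\bar A$ is again a commuting normal family. Your pre-annihilator argument for this is sound: the defining trace identity of the Gel'fand integral shows $\bar A$ annihilates the pre-annihilator of the weak$^*$-closed subspace $\mathcal{M}_A$ of $\mathcal{B}(\mathcal{H})=(\mathcal{C}_1(\mathcal{H}))^*$, whence $\bar A\in\mathcal{M}_A$ by the bipolar theorem. In exchange you avoid passing to $\Omega^2$ entirely, and you obtain the variance operators directly from the identity \eqref{prvo} of Proposition \ref{optlema} rather than through the Korkine computation; both arguments then terminate in the same Cauchy--Schwarz inequality for i.p.t.i.\ transformers from \cite{JOC} and yield the same conclusion and norm bound, so the two proofs are equally valid, with yours trading the paper's doubling of variables for a von Neumann algebra localization of the means.
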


\begin{proof}
First we note that we have the following Korkine type identity for
i.p.t.i  transformers
\begin{eqnarray}
\nonumber && \int_\Omega A_t X\ B_t d\mu(t)\!-\!\int_\Omega A_{t}
d\mu(t) X\! \!     \int_\Omega B_{t} d\mu(t)
\!\!\\
\nonumber&&\lefteqn{=\!\!\int_\Omega d\mu(s)\int_\Omega  A_t X B_t d\mu(t)\!-\!\int_\Omega\!\int_\Omega A_t X B_s\,d\mu(s)d\mu(t)}\\
\!\!&&=\!\!\dfrac12\int_{\Omega^2}(A_s- A_t)X( B_s- B_t)d(\mu\times\mu)(s,t).\label{21}
\end{eqnarray}

In this representation we have $(A_s-A_t)_{(s,t)\in\Omega^2}$
and $(B_s-B_t)_{(s,t)\in\Omega^2}$ to be in
$L^2(\Omega^2,\mu\times\mu, \mathcal{B(H)})$ because by  an application of the
identity (\ref{21}),

\begin{eqnarray}
\nonumber
\dfrac12\int_{\Omega^2}\left| A_s- A_t\right|^2d(\mu\times\mu)(s,t)&=&\int_\Omega| A_{t}|^2
d\mu(t)-
\left|\int_\Omega A_{t} d\mu(t)\right|^2\\
&=&\int_\Omega\left| A_t-\int_\Omega A_{t} d\mu(t)\right|^2 d\mu(t)
\in B(H).\label{23}
\end{eqnarray}

Both families $(A_s-A_t)_{(s,t)\in\Omega^2}$ and
$(B_s-B_t)_{(s,t)\in\Omega^2}$ consist of commuting normal
operators and by Theorem 3.2 in \cite{JOC}
 $$\dfrac12\int_{\Omega^2}(A_s-A_t)X(B_s-B_t)d(\mu\times\mu)(s,t)\in \mathcal{C}_{|\|\cdot|
|\|}(\mathcal{H})$$

due to identities (\ref{22}) and (\ref{23}). And so the conclusion
(\ref{21}) follows.
\end{proof}

\begin{lemma}
Let $\mu$ (resp. $\nu$) be  a probability measure on $\Omega$ (resp.
$\mho$), let both families
$\{A_s,C_t\}_{(s,t)\in\Omega\times\mho}$ and
$\{B_s, D_t\}_{(s,t)\in\Omega\times\mho}$
 consist of commuting normal operators
and let
\begin{eqnarray*}&&\sqrt{\,\int_\Omega|A_s|^2d\mu(s)
\int_\mho|C_t|^2d\nu(t)-\left|\int_\Omega A_sd\mu(s)\int_\mho C_td\nu(t)\right|^2} X\\
&&\sqrt{\,\int_\Omega| B_s|^2d\mu(s)
\int_\mho|D_t|^2d\nu(t)-\left|\int_\Omega B_sd\mu(s)\int_\mho D_td\nu(t)\right|^2}
\end{eqnarray*}
be in $\mathcal{C}_{|\|\cdot|
|\|}(\mathcal{H})$ for some $X\in \mathcal{B(H)}$.
Then \begin{eqnarray*}&&\int_\Omega \int_\mho A_s
C_tX B_s D_t\,d\mu(s)\,d\nu(t) -\\&-&\int_\Omega A_s
\,d\mu(s)\int_\mho C_t\,d\nu(t) X\!\!\int_\Omega B_s \,d\mu(s)
\int_\mho D_t\,d\nu(t) \in\mathcal{C}_{|\|\cdot|
|\|}(\mathcal{H})\end{eqnarray*}
\end{lemma}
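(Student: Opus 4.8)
The plan is to reduce the statement to Theorem \ref{normalnisluchaj} by passing to the product probability space $(\Omega\times\mho,\mu\times\nu)$ and introducing the composite fields $\widetilde A_{(s,t)}=A_sC_t$ and $\widetilde B_{(s,t)}=B_sD_t$. Observe that the transformer term in the conclusion is \emph{literally} $\int_{\Omega\times\mho}\widetilde A_{(s,t)}\,X\,\widetilde B_{(s,t)}\,d(\mu\times\nu)(s,t)$, so once the composite fields are shown to satisfy the hypotheses of Theorem \ref{normalnisluchaj}, the result should drop out after rewriting the ``mean'' terms by a Fubini factorization.

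First I would check that $(\widetilde A,\widetilde B)$ meets every hypothesis of Theorem \ref{normalnisluchaj} on the product space. Since $\mu$ and $\nu$ are probability measures, $\mu\times\nu$ is a probability measure on $\Omega\times\mho$. Because $\{A_s,C_t\}$ consists of mutually commuting normal operators, each product $A_sC_t$ is normal and any two such products commute, so $(\widetilde A_{(s,t)})$ is a field of commuting normal operators; the same argument applies to $(\widetilde B_{(s,t)})$. (Note that, exactly as in Theorem \ref{normalnisluchaj}, the two fields $\widetilde A$ and $\widetilde B$ need not commute with one another.) Weak$^*$-measurability and Gel'fand integrability of $\widetilde A$ and $\widetilde B$ follow from those of the factors together with Proposition \ref{P2}.

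The computational heart of the argument is a Fubini-type factorization for the Gel'fand integrals involved. Testing against an arbitrary $Y\in\mathcal{C}_1(\mathcal{H})$ in the defining identity \eqref{geljfandovintegral} and applying the scalar Fubini theorem, I would establish
$$\int_{\Omega\times\mho}\widetilde A\,d(\mu\times\nu)=\int_\Omega A_s\,d\mu(s)\int_\mho C_t\,d\nu(t),$$
and likewise for $\widetilde B$. Moreover, since $A_s$ and $C_t$ are commuting normals, the Fuglede--Putnam theorem gives that $A_s^\ast$ commutes with $C_t$, whence $C_t^\ast$ commutes with $|A_s|^2=A_s^\ast A_s$ and therefore $|A_sC_t|^2=C_t^\ast|A_s|^2C_t=|A_s|^2|C_t|^2$. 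A further application of Fubini then yields
$$\int_{\Omega\times\mho}|\widetilde A|^2\,d(\mu\times\nu)=\int_\Omega|A_s|^2\,d\mu(s)\int_\mho|C_t|^2\,d\nu(t),$$
and analogously for $\widetilde B$. Subtracting the squared means reproduces precisely the two expressions appearing under the square roots in the hypothesis, so the assumed membership in $\mathcal{C}_{|\|\cdot|\|}(\mathcal{H})$ is exactly the hypothesis of Theorem \ref{normalnisluchaj} for the fields $\widetilde A,\widetilde B$ and the operator $X$.

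Finally I would invoke Theorem \ref{normalnisluchaj} on $(\Omega\times\mho,\mu\times\nu)$ to conclude that
$$\int_{\Omega\times\mho}\widetilde A\,X\,\widetilde B\,d(\mu\times\nu)-\int_{\Omega\times\mho}\widetilde A\,d(\mu\times\nu)\,X\int_{\Omega\times\mho}\widetilde B\,d(\mu\times\nu)\in\mathcal{C}_{|\|\cdot|\|}(\mathcal{H}),$$
and substitute the factorizations of the two mean integrals to recover the claimed expression verbatim. The step I expect to be the main obstacle is the rigorous justification of these operator-valued Fubini identities for Gel'fand integrals together with the factorization $|\widetilde A|^2=|A_s|^2|C_t|^2$; everything else is a direct transcription of the hypotheses onto the product space. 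I would handle the Fubini identities by reducing to the scalar case through trace duality against $\mathcal{C}_1(\mathcal{H})$, exactly as in the defining relation \eqref{geljfandovintegral}, and by invoking the $L^2$-integrability of the factor fields (implicit in the finiteness of the operators under the square roots) to guarantee absolute convergence of the iterated integrals.
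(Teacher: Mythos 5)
Your proposal is correct and follows exactly the paper's route: the paper likewise applies Theorem \ref{normalnisluchaj} to the probability measure $\mu\times\nu$ on $\Omega\times\mho$ and the composite families $(A_sC_t)$ and $(B_sD_t)$ of commuting normal operators, leaving the remaining verifications as ``trivial.'' You have in fact supplied more detail than the paper does --- notably the Fuglede--Putnam argument for normality of the products and the factorizations $|A_sC_t|^2=|A_s|^2|C_t|^2$ and $\int_{\Omega\times\mho}A_sC_t\,d(\mu\times\nu)=\int_\Omega A_s\,d\mu\int_\mho C_t\,d\nu$ via trace duality --- all of which is sound.
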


\begin{proof}
Apply Theorem \ref{normalnisluchaj} to the probability measure
$\mu\times\nu$ on $\Omega\times\mho$ and families
 $(A_s C_t)_{(s,t)\in\Omega\times\mho}$ and
$( B_s D_t)_{(s,t)\in\Omega\times\mho}$ of normal commuting
operators in $L_G^2(\Omega\times\mho,d\mu\times\nu,\mathcal{B(H)}).$ The rest follows trivially.
\end{proof}

Next we consider Landau type inequality for i.p.t.i   transformers in Schatten
ideals for the Schatten $p$-norms.

\begin{prop}
Let $\mu$ be a probability measure on $\Omega$, let
$(A_t)_{t\in\Omega}$ and $(B_t)_{t\in \Omega}$ be
$\mu$-weak${}^*$ measurable families of bounded Hilbert space
operators such that\\
$\int_\Omega\left(\|A_t f\|^2+\|A_t^* f\|^2+\| B_t f\|^2+\|B_t^* f\|^2\right)d\mu(t)<\infty\;
\textrm{ \rm for all $f\in \mathcal{H}$}$ and let $p,q,r\ge1$ such that
$\dfrac1p=\dfrac1{2q}+\dfrac1{2r}\,$. Then for all
$X\in \mathcal{C}_p(\mathcal{H})$,
\begin{eqnarray}
\label{grussp}&&\lefteqn{
           \left\|\int_\Omega A_t X B_t d\mu(t)-      \int_\Omega A_{t} d\mu(t)   X \int_\Omega B_{t} d\mu(t)\right\|_p}\\
         &\leqslant&\kern-4pt\left\|
\left(\int_\Omega\left|\left(\int_\Omega\left|A_t^*-\int_\Omega A_{t}^*
d\mu(t) \right|^2 d\mu(t)   \right)^{\frac{q-1}2}
\left(A_t-\int_\Omega A_{t} d\mu(t)\right)\right|^2
d\mu(t)\right)^{\frac1{2q}}\right\|\nonumber\end{eqnarray}
\begin{equation*} X\left\|\left(\int_\Omega\left|\left(\int_\Omega\left| B_t-\int_\Omega B_{t}d\mu(t)
\right|^2 d\mu(t)   \right)^{\frac{r-1}2}
\left( B_t^*-\int_\Omega B_{t}^* d\mu(t)\right)\right|^2
d\mu(t)\right)^{\frac1{2r}}\right\|_p.
\end{equation*}
\end{prop}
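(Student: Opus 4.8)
The plan is to reduce the statement to the known Cauchy--Schwarz (Landau) inequality for inner product type integral transformers in Schatten ideals by a centering argument. Write $\bar A=\int_\Omega A_t\,d\mu(t)$ and $\bar B=\int_\Omega B_t\,d\mu(t)$ for the Gel'fand mean operators, which are bounded by Proposition \ref{novadefinicionalema}, and introduce the centered fields $\widetilde A_t=A_t-\bar A$ and $\widetilde B_t=B_t-\bar B$. First I would record the algebraic identity
\[
\int_\Omega A_tXB_t\,d\mu(t)-\bar AX\bar B=\int_\Omega \widetilde A_tX\widetilde B_t\,d\mu(t),
\]
which follows by expanding the right-hand side and using $\int_\Omega d\mu=1$; this is the Gel'fand-integral form of the Korkine-type identity (\ref{21}). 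Thus the operator whose $p$-norm must be estimated is exactly $\mathcal I_{\widetilde A,\widetilde B}(X)=\int_\Omega\widetilde A_tX\widetilde B_t\,d\mu(t)$.

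Next I would verify that the centered fields are admissible, i.e.\ that $\mathcal I_{\widetilde A,\widetilde B}$ is a well-defined transformer on $\mathcal C_p(\mathcal H)$. Since $\|\widetilde A_tf\|^2\le 2\|A_tf\|^2+2\|\bar Af\|^2$, and likewise for $\widetilde A_t^{\,*},\widetilde B_t,\widetilde B_t^{\,*}$, the hypothesis $\int_\Omega(\|A_tf\|^2+\|A_t^*f\|^2+\|B_tf\|^2+\|B_t^*f\|^2)\,d\mu(t)<\infty$ passes to the centered fields. By the Remark following Proposition \ref{novadefinicionalema} this means $\widetilde A^{\,*}$ and $\widetilde B$ both lie in $L^2_G(\Omega,d\mu,\mathcal B(\mathcal H))$, which is precisely the sufficient condition recalled at the start of Section 4 for the transformer to leave $\mathcal C_p(\mathcal H)$ invariant.

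The heart of the argument is then to apply the Schatten-norm Cauchy--Schwarz inequality for i.p.t.i.\ transformers---the version valid without normality or commutativity (Theorem 3.3 in \cite{JOC}, see also \cite{joc09i})---in its H\"older-weighted form corresponding to the exponent split $\frac1p=\frac1{2q}+\frac1{2r}$. Applied to $(\widetilde A_t)$ and $(\widetilde B_t)$, the bounded positive operators $\int_\Omega|\widetilde A_s^{\,*}|^2\,d\mu(s)$ and $\int_\Omega|\widetilde B_s|^2\,d\mu(s)$ appear as the interpolation weights raised to the powers $\frac{q-1}2$ and $\frac{r-1}2$. Because $\widetilde A_t^{\,*}=A_t^*-\int_\Omega A_t^*\,d\mu(t)$ and $\widetilde B_t=B_t-\int_\Omega B_t\,d\mu(t)$, these weights are exactly the centered second moments written on the right-hand side of the statement, so the inequality reproduces the asserted bound verbatim once combined with the identity from the first paragraph.

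Finally, I expect the main obstacle to be the weighted Cauchy--Schwarz step itself rather than the bookkeeping. Establishing it from scratch requires representing the transformer through the column operators $\vec{\widetilde A},\vec{\widetilde B}$ of the Remark in Section 3, combining the basic ($q=r=1$) Cauchy--Schwarz estimate with a noncommutative H\"older inequality, and checking that the fractional powers of the positive weights interact correctly with Schatten--$p$ duality under the constraint $\frac1p=\frac1{2q}+\frac1{2r}$; care is needed to ensure each intermediate factor lies in the appropriate norm ideal so that the H\"older splitting is legitimate. Granting that weighted inequality as cited, the proposition follows immediately from the centering identity and the admissibility check.
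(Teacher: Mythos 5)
Your proof is correct, but it takes a genuinely different route from the paper's. You center the fields once, setting $\widetilde A_t=A_t-\bar A$ and $\widetilde B_t=B_t-\bar B$, use the identity $\int_\Omega A_tXB_t\,d\mu-\bar AX\bar B=\int_\Omega\widetilde A_tX\widetilde B_t\,d\mu$ (valid since $\mu(\Omega)=1$), and then invoke the weighted Schatten-norm Cauchy--Schwarz inequality (Theorem 3.3 of \cite{JOC}) a single time; the weights $\int_\Omega|\widetilde A_t^{\,*}|^2d\mu$ and $\int_\Omega|\widetilde B_t|^2d\mu$ are already the centered second moments appearing in the statement, so the bound drops out at once. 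The paper instead works with the Korkine-type double-integral representation \eqref{21}, applies the same Theorem 3.3 to the difference fields $(A_s-A_t)$ and $(B_s-B_t)$ on $(\Omega^2,\mu\times\mu)$, and then needs three successive applications of the variance identity \eqref{23} --- once for each interior weight, and once more via the auxiliary families $(YA_t)$ and $(ZB_t^*)$ for the outer integrals --- to convert the product-space expressions back into the centered single-integral form of the right-hand side. Your decomposition is shorter and makes transparent why centered moments appear; the paper's route buys uniformity with the proof of Theorem \ref{normalnisluchaj} and with the classical Korkine argument for the Gr\"uss inequality, at the cost of the extra bookkeeping. Two details in your write-up are worth keeping: you explicitly check that the centered fields inherit the integrability hypothesis (via $\|\widetilde A_tf\|^2\le2\|A_tf\|^2+2\|\bar Af\|^2$, with the $\bar A$ term finite because $\bar A$ is a bounded Gel'fand integral), a point the paper passes over silently; and since both arguments ultimately rest on the same cited weighted Cauchy--Schwarz theorem, deferring that step to the reference, as you do in your final paragraph, is exactly what the paper itself does.
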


\begin{proof}
According to identity (\ref{23}), application of Theorem 3.3 in
\cite{JOC} to   families
$(\mathscr{A}_s-\mathscr{A}_t)_{(s,t)\in\Omega^2}$ and
 $(\mathscr{B}_s-\mathscr{B}_t)_{(s,t)\in\Omega^2}$ gives

\begin{eqnarray}
&&
 \left\|\int_\Omega A_t X B_t d\mu(t)-\int_\Omega\mathscr{A}_td\mu(t) X\int_\Omega\mathscr{B}_td\mu(t)\right\|_p \nonumber\\
&=&\left\|\dfrac12\int_{\Omega^2}(A_s-A_t)X(B_s-B_t)d(\mu\times\mu)(s,t)\right\|_p\le \nonumber
\end{eqnarray}

$$
\left\|\left(\dfrac12\int_{\Omega^2}(\mathscr{A}_s^*-\mathscr{A}_t^*)
\left(\dfrac12\int_{\Omega^2}|\mathscr{A}_s^*-\mathscr{A}_t^*|^2(\mu\times\mu)(s,t)\right)^{q-1}\kern-13.1pt
(\mathscr{A}_s-\mathscr{A}_t)d(\mu\times\mu)(s,t)\right)^{\frac1{2q}}\right\|.\kern-10pt
$$ \begin{equation}\label{pnorm}\end{equation}
$$\left\|\left(\dfrac12\int_{\Omega^2}(\mathscr{B}_s-\mathscr{B}_t)
\Bigl(\dfrac12\int_{\Omega^2}|\mathscr{B}_s-\mathscr{B}_t|^2(\mu\times\mu)(s,t)\Bigr)^{r-1}\kern-8pt
(\mathscr{B}_s^*-\mathscr{B}_t^*)d(\mu\times\mu)(s,t)\right)^{\frac1{2r}}\kern-2pt\right\|_p.$$

By application of identity (\ref{23}) once again, the last
expression in (\ref{pnorm}) becomes

$$\bigg\|\biggl(\dfrac12\int_{\Omega^2}(\mathscr{A}_s-\mathscr{A}_t)^*
\left(\int_\Omega\left|\mathscr{A}_t^*-\int_\Omega \mathscr{A}^*
_t d\mu(t)\right|^2d\mu(t)\right)^{q-1}
(\mathscr{A}_s-\mathscr{A}_t)d(\mu\times\mu)(s,t)\biggr)^{\frac1{2q}}$$

$$\biggl(\dfrac12\int_{\Omega^2}(\mathscr{B}_s-\mathscr{B}_t)
\Bigl(\int_\Omega\left|\mathscr{B}_s-\int_\Omega \mathscr{B}
_t d\mu(t)\right|^2d\mu(s)\Bigr)^{r-1}\kern-5pt(\mathscr{B}_s-\mathscr{B}_t)^*d(\mu\times\mu)(s,t)\biggr)^{\frac1{2r}}\bigg\|_p.\label{odvizraz}
$$
Denoting
$\Bigl(\int_\Omega\left|A_s^*-\int_\Omega\mathscr{A}^*d\mu\right|^2d\mu(s)\Bigr)^{\frac{p-1}2}$
\kern-6.5pt(resp.
$\Bigl(\int_\Omega\left|B_s-\int_\Omega\mathscr{B}d\mu\right|^2d\mu(s)\Bigr)^{\frac{r-1}2}$)
by $Y$ (resp. $Z$),
then 
the expression in (\ref{pnorm}) becomes
\begin{eqnarray}\label{saYZ}
\biggl\|\left(\dfrac12\int_{\Omega^2}\left|Y A_s-Y A_t\right|^2d(\mu\times\mu)(s,t)\right)^{\frac1{2q}}.\\
\nonumber
\left(\dfrac12\int_{\Omega^2}\left|Z B_s^*-Z B_t^*\right|^2d(\mu\times\mu)(s,t)\right)^{\frac1{2r}}\biggr\|_p.
\end{eqnarray}

By a new application of identity (\ref{23}) to families
$(Y A_t)_{t\in\Omega}$ and $(Z B_t^*)_{t\in\Omega}$ (\ref{saYZ})
becomes

$$\left\|\left(\int_\Omega\left|Y\mathscr{A}_t-\int_\Omega Y\mathscr{A}_t d\mu(t)\right|^2d\mu(t)\right)^{\frac1{2q}}\kern-4pt .
\left(\int_\Omega\left|Z\mathscr{B}_t^*-\int_\Omega Z\mathscr{B}_t^*
d\mu(t)\right|^2d\mu(t)\right)^{\frac1{2r}}\right\|_p,$$
which obviously equals to the righthand side expression in
(\ref{grussp}).
\end{proof}
The following next result from \cite{JOC} is a special case of an abstract H\"older inequality presented in
Theorem 3.1.(e) in \cite{JOC} for Cauchy-Schwarz  inequality for o.v. functions in u.i. norm ideals. We state it as follows.

\begin{prop}
\label{koshishvarcovacha} Let $\mu$ be a  measure on $\Omega$, let
$(A_t)_{t\in\Omega}$
 and $(B_t)_{t\in \Omega}$ be
$\mu$-weak${}^*$ measurable  in $\mathcal{B(H)}$
 such that
$|\int_\Omega|A_t|^2 d\mu(t)|^\theta$ and
$|\int_\Omega|B_t|^2 d\mu(t)|^\theta$ are in  $\mathcal{C}_{\||.|\|}\mathcal{H}$ for some
$\theta>0$ and for  u.i. norm. Then the following  holds.
$$
           \|||\int_\Omega A_t^* B_t d\mu(t)\|||^\theta \|||\le
           \|||\int_\Omega A_t^* A_t d\mu(t)\|||^\theta \|||^\frac12
          \|||\int_\Omega B_t^* B_t d\mu(t)\|||^\theta \|||^\frac12.
$$
\end{prop}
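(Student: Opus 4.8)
The plan is to reduce the assertion to a purely operator-theoretic Cauchy--Schwarz inequality by passing to the vectorized operators of the Remark following Proposition~\ref{novadefinicionalema}. In the auxiliary Hilbert space $\mathcal{K}=L^2(\Omega,\mu,\mathcal{H})$ I would introduce $\vec{A},\vec{B}\colon\mathcal{H}\to\mathcal{K}$, $(\vec{A}f)(t)=A_tf$ and $(\vec{B}f)(t)=B_tf$. The hypothesis that $|\int_\Omega|A_t|^2d\mu(t)|^\theta$ and $|\int_\Omega|B_t|^2d\mu(t)|^\theta$ belong to the ideal $\mathcal{C}_{|||\cdot|||}(\mathcal{H})$ forces $\int_\Omega A_t^*A_t\,d\mu(t)=\vec{A}^{\,*}\vec{A}$ and $\int_\Omega B_t^*B_t\,d\mu(t)=\vec{B}^{\,*}\vec{B}$ to be compact positive operators, so that $\vec{A}$ and $\vec{B}$ are compact and everywhere defined (the domain condition $\int_\Omega\|A_tf\|^2d\mu(t)<\infty$ of the Remark holds for every $f$). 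Unwinding the defining relation of the Gel'fand integral in Proposition~\ref{novadefinicionalema} gives, for all $f,g\in\mathcal{H}$, the identity $\langle(\int_\Omega A_t^*B_t\,d\mu(t))f,g\rangle=\int_\Omega\langle B_tf,A_tg\rangle\,d\mu(t)=\langle\vec{B}f,\vec{A}g\rangle_{\mathcal{K}}$, so that $\int_\Omega A_t^*B_t\,d\mu(t)=\vec{A}^{\,*}\vec{B}$. Writing $T=\vec{A}$, $S=\vec{B}$, the claim becomes $|||\,|T^*S|^\theta\,|||\le|||\,|T^*T|^\theta\,|||^{1/2}\,|||\,|S^*S|^\theta\,|||^{1/2}$, an inequality between compact operators.

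Next I would exploit the factorization of $T^*S$ through the moduli. Using polar decompositions $T=U|T|$ and $S=V|S|$ one has $T^*S=|T|\,U^*V\,|S|=|T|\,K\,|S|$ with $K=U^*V$ a contraction. The multiplicative Horn inequality applied to the product $|T|\cdot(K|S|)$, together with $s_j(K|S|)\le\|K\|\,s_j(|S|)\le s_j(|S|)$, yields the weak log-majorization $\prod_{j=1}^k s_j(T^*S)\le\prod_{j=1}^k s_j(|T|)\,s_j(|S|)$ for every $k$. Since $x\mapsto x^\theta$ only rescales logarithms, this log-majorization survives the $\theta$-th power, and weak log-majorization implies weak majorization; thus $(s_j(T^*S)^\theta)_j$ is weakly majorized by $(s_j(|T|)^\theta s_j(|S|)^\theta)_j$.

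The final step feeds this into the symmetric gauge function $\Phi$ that defines $|||\cdot|||$. By Ky Fan dominance, weak majorization gives $|||\,|T^*S|^\theta\,|||=\Phi\big((s_j(T^*S)^\theta)_j\big)\le\Phi\big((s_j(|T|)^\theta s_j(|S|)^\theta)_j\big)$, and the Cauchy--Schwarz inequality for symmetric gauge functions (equivalently the $2$-reconvexification estimate discussed in Section~2) bounds the right-hand side by $\Phi\big((s_j(|T|)^{2\theta})_j\big)^{1/2}\Phi\big((s_j(|S|)^{2\theta})_j\big)^{1/2}$. Since $T^*T$ and $S^*S$ are positive, $s_j(|T|)^{2\theta}=s_j((T^*T)^\theta)=s_j(|T^*T|^\theta)$, and likewise for $S$, so the two factors are precisely $|||\,|T^*T|^\theta\,|||^{1/2}$ and $|||\,|S^*S|^\theta\,|||^{1/2}$, which is the desired bound. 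As the excerpt indicates, the same conclusion follows in one line by specializing the abstract H\"older inequality of Theorem~3.1(e) in \cite{JOC} to the conjugate exponents $2,2$; the argument above is simply an explicit unpacking of that specialization.

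I expect the only genuinely delicate point to be the identification $\int_\Omega A_t^*B_t\,d\mu(t)=\vec{A}^{\,*}\vec{B}$: one must verify that the operator $\vec{A}^{\,*}\vec{B}$, defined weakly through the $\mathcal{K}$-inner product, really coincides with the weak${}^*$ Gel'fand integral, which is exactly where the $\mu$-weak${}^*$ measurability and square-integrability hypotheses are consumed. Once the three integrals are recognized as $\vec{A}^{\,*}\vec{B}$, $\vec{A}^{\,*}\vec{A}$ and $\vec{B}^{\,*}\vec{B}$ of compact operators, the remaining majorization chain is entirely routine.
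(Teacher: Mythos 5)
Your proof is correct, but it takes a genuinely different route from the paper. The paper's entire proof is a two-line specialization of the abstract H\"older inequality, Theorem~3.1(e) of \cite{JOC}: one takes the symmetric gauge function $\Phi$ generating the norm, sets $\Phi_1=\Phi$, $\Phi_2=\Phi_3=\Phi^{(2)}$ (the $2$-reconvexization), $\alpha=2\theta$ and $X=I$, and reads off the inequality --- exactly the shortcut you mention in your closing sentence. You instead unpack that black box: after the vectorization $T=\vec{A}$, $S=\vec{B}$ on $L^2(\Omega,\mu,\mathcal{H})$ and the identifications $\int_\Omega A_t^*B_t\,d\mu=T^*S$, $\int_\Omega A_t^*A_t\,d\mu=T^*T$, $\int_\Omega B_t^*B_t\,d\mu=S^*S$, you reprove from scratch the Bhatia--Davis-type Cauchy--Schwarz inequality $|||\,|T^*S|^\theta|||^2\le|||(T^*T)^\theta|||\cdot|||(S^*S)^\theta|||$ via polar decompositions, Horn's multiplicative inequality $\prod_{j\le k}s_j(T^*S)\le\prod_{j\le k}s_j(|T|)s_j(|S|)$, the passage from weak log-majorization to weak majorization (which indeed commutes with the power $x\mapsto x^\theta$), Ky Fan dominance, and the gauge-function Cauchy--Schwarz $\Phi(ab)\le\Phi(a^2)^{1/2}\Phi(b^2)^{1/2}$. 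Each step you invoke is standard and correctly applied, and you rightly isolate the one measure-theoretic point --- that $\vec{A}^{\,*}\vec{B}$ coincides with the Gel'fand integral, where the weak${}^*$-measurability and the square-integrability $\int_\Omega\|A_tf\|^2d\mu(t)=\langle(\int_\Omega|A_t|^2d\mu)f,f\rangle<\infty$ are consumed; your compactness argument (a positive operator with compact $\theta$-th power is compact, hence $T$ and $S$ are compact) is also sound and is needed for the singular-value machinery. What the two approaches buy: the paper's citation is shorter and sits inside a more general H\"older framework (arbitrary triples of gauge functions and an arbitrary inner factor $X$, of which your case is the $(2,2)$-exponent, $X=I$ corner), while your argument is self-contained, makes visible why the constant structure is a genuine Cauchy--Schwarz phenomenon, and supplies the functional-analytic bookkeeping that the paper's one-liner leaves implicit.
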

\begin{proof}
Take   $\Phi$ to be a
     s.g. function that generates u.i. norm $\||\cdot\||$,
 $\Phi_1=\Phi$,
$\Phi_2=\Phi_3=\Phi^{(2)}$ (2-reconvexization  of $\Phi$),
$\alpha=2\theta$ and $X=I$, and then apply 3.1 from \cite{JOC}.
\end{proof}
At this point, we give another generalization of Landau
inequality  for Gel'fand integrals of o.v. functions in u.i. norm ideals
\begin{theorem}
\label{korelacionateorema} If  $\mu$ is  a probability  measure on
$\Omega$, $\theta>0$ and $(A_t)_{t\in\Omega}$
 and $(B_t)_{t\in \Omega}$ are as
   in Proposition \ref{koshishvarcovacha},
$\mu$-weak${}^*$ measurable families of bounded Hilbert space operators
 such that
$\|||\int_\Omega|A_t|^2d\mu(t)\|||^\theta$ and
$\|||\int_\Omega|B_t|^2d\mu(t)\|||^\theta$ are 
in $\mathcal{C}_{\||.|\|}\mathcal{H}$ for some $\theta>0$ and for some u.i. norm $\||\cdot\||$
then,

  $ \left\|\left|\int_\Omega A_t^* B_td\mu(t)
                  -\int_\Omega A_t^*d\mu(t)\int_\Omega B_td\mu(t) \|||^\theta  \right\|\right|^2\le \||\int_\Omega  \||| A_t \|||^2d\mu(t)-\label{korelaciona}$
                  \\
           $  \|||\int_\Omega A_td\mu(t) \|||^2 \|||^\theta \|||
            \|||\int_\Omega  \||| B_t \|||^2d\mu(t)- \|||\int_\Omega B_td\mu(t) \|||^2 \|||^\theta \||.$

\end{theorem}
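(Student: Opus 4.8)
The plan is to read the asserted inequality (\ref{korelaciona}) as an operator-theoretic Cauchy--Schwarz bound for ``covariances,'' in exact analogy with the classical Landau inequality $|D(f,g)|\le\sqrt{D(f,f)D(g,g)}$ recalled just above the theorem, and to obtain it by feeding \emph{centered} fields into the abstract Cauchy--Schwarz estimate of Proposition~\ref{koshishvarcovacha}. Writing $\widehat{A}=\int_\Omega A_t\,d\mu(t)$ and $\widehat{B}=\int_\Omega B_t\,d\mu(t)$, and using that the Gel'fand integral commutes with taking adjoints (so that $\int_\Omega A_t^*\,d\mu(t)=\widehat{A}^{\,*}$), I would set $\widetilde{A}_t=A_t-\widehat{A}$ and $\widetilde{B}_t=B_t-\widehat{B}$. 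Since $\mu$ is a probability measure, expanding the product $\widetilde{A}_t^{\,*}\widetilde{B}_t$ and integrating term by term (the same device, polarized, that produces the variance identity (\ref{23})) yields the covariance identity
\begin{equation*}
\int_\Omega A_t^* B_t\,d\mu(t)-\int_\Omega A_t^*\,d\mu(t)\int_\Omega B_t\,d\mu(t)=\int_\Omega \widetilde{A}_t^{\,*}\widetilde{B}_t\,d\mu(t),
\end{equation*}
together with its two diagonal specializations
\begin{equation*}
\int_\Omega|A_t|^2\,d\mu(t)-\bigl|\widehat{A}\bigr|^2=\int_\Omega\bigl|\widetilde{A}_t\bigr|^2\,d\mu(t),\qquad\int_\Omega|B_t|^2\,d\mu(t)-\bigl|\widehat{B}\bigr|^2=\int_\Omega\bigl|\widetilde{B}_t\bigr|^2\,d\mu(t).
\end{equation*}

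Before invoking Proposition~\ref{koshishvarcovacha} on the centered fields, I would check that they still satisfy its hypotheses. Square-integrability of $(\widetilde{A}_t)$ and $(\widetilde{B}_t)$ is immediate, since translating a square-integrable field by a fixed bounded operator preserves finiteness of $\int_\Omega(\|\cdot\, f\|^2+\|\cdot^*f\|^2)\,d\mu(t)$. For the ideal-membership requirement, the diagonal identities above exhibit $\int_\Omega|\widetilde{A}_t|^2\,d\mu(t)$ as the positive operator $\int_\Omega|A_t|^2\,d\mu(t)-|\widehat{A}|^2$, which is dominated by $\int_\Omega|A_t|^2\,d\mu(t)$; by the operator-monotonicity step used in the proof of Proposition~\ref{optlema} (namely that $0\le C\le D$ forces $|||C^\theta|||\le|||D^\theta|||$), the assumed membership of $\bigl(\int_\Omega|A_t|^2\,d\mu(t)\bigr)^\theta$ in $\mathcal{C}_{|||\cdot|||}(\mathcal{H})$ transfers to the centered version, and likewise for $B$. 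Hence Proposition~\ref{koshishvarcovacha} is applicable to $(\widetilde{A}_t)$ and $(\widetilde{B}_t)$.

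With the hypotheses secured, applying Proposition~\ref{koshishvarcovacha} to the centered fields and squaring gives
\begin{align*}
&\left\vert\left\vert\left\vert\left\vert\int_\Omega\widetilde{A}_t^{\,*}\widetilde{B}_t\,d\mu(t)\right\vert^\theta\right\vert\right\vert\right\vert^2\\
&\qquad\le\left\vert\left\vert\left\vert\left\vert\int_\Omega\bigl|\widetilde{A}_t\bigr|^2\,d\mu(t)\right\vert^\theta\right\vert\right\vert\right\vert\cdot\left\vert\left\vert\left\vert\left\vert\int_\Omega\bigl|\widetilde{B}_t\bigr|^2\,d\mu(t)\right\vert^\theta\right\vert\right\vert\right\vert,
\end{align*}
and substituting the three identities of the first paragraph turns this verbatim into (\ref{korelaciona}). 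I expect the only genuine obstacle to sit in the middle step: one must be certain that centering neither destroys square-integrability nor pushes the averaged squares out of the norm ideal. It is precisely the variance-reduction inequality (\ref{prvo}) of Proposition~\ref{optlema}, which gives $\int_\Omega|\widetilde{A}_t|^2\,d\mu(t)\le\int_\Omega|A_t|^2\,d\mu(t)$, combined with operator monotonicity, that guarantees this; the remaining work is the routine probability-style manipulation converting covariance into centered form.
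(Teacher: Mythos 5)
Your proof is correct, and it reaches the result by the same key lemma as the paper but via a different decomposition. The paper's proof is a one-liner: it invokes Proposition~\ref{koshishvarcovacha} on the \emph{difference} fields $(A_s-A_t)_{(s,t)\in\Omega^2}$ and $(B_s-B_t)_{(s,t)\in\Omega^2}$ over the product probability space $(\Omega^2,\mu\times\mu)$, converting the covariance via the Korkine identity (\ref{21}) and the variance via (\ref{23}), with the factor $\frac12$ absorbed there. You instead center on the original space: you apply Proposition~\ref{koshishvarcovacha} to $(A_t-\widehat{A})_{t\in\Omega}$ and $(B_t-\widehat{B})_{t\in\Omega}$ and use the covariance identity $\int_\Omega A_t^*B_t\,d\mu-\widehat{A}^{\,*}\widehat{B}=\int_\Omega\widetilde{A}_t^{\,*}\widetilde{B}_t\,d\mu$, valid because $\mu$ is a probability measure. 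In substance these are two readings of the same identity, since (\ref{23}) states precisely that $\frac12\int_{\Omega^2}|A_s-A_t|^2\,d(\mu\times\mu)=\int_\Omega\bigl|A_t-\int_\Omega A_t\,d\mu\bigr|^2d\mu$; but your route avoids the passage to $\Omega^2$ and, importantly, makes explicit a verification the paper leaves silent: that the transformed fields still satisfy the hypotheses of Proposition~\ref{koshishvarcovacha}. Your argument for this---$\int_\Omega|\widetilde{A}_t|^2\,d\mu\le\int_\Omega|A_t|^2\,d\mu$ from (\ref{prvo}), then $0\le C\le D\Rightarrow s_n^\theta(C)\le s_n^\theta(D)\Rightarrow\left\vert\left\vert\left\vert C^\theta\right\vert\right\vert\right\vert\le\left\vert\left\vert\left\vert D^\theta\right\vert\right\vert\right\vert$ as in the proof of Proposition~\ref{optlema}---is exactly what is needed, and the same check is tacitly required for the paper's difference fields as well. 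What the paper's doubling device buys is uniformity: it is the single template used throughout (in Theorem~\ref{normalnisluchaj} and the Schatten $p$-norm proposition); what your centering buys is a shorter, self-contained derivation with the hypothesis transfer spelled out. One small point worth stating explicitly in your write-up: the identity $\int_\Omega A_t^*\,d\mu(t)=\bigl(\int_\Omega A_t\,d\mu(t)\bigr)^*$ for Gel'fand integrals, which you use to form $\widehat{A}^{\,*}$, follows directly from the defining property (\ref{geljfandovintegral}).
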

\begin{proof}
It suffices to invoke Proposition \ref{koshishvarcovacha}  to o.v. families
$(A_s-A_t)_{(s,t)\in\Omega^2}$ and
$(B_s-B_t)_{(s,t)\in\Omega^2}$ and use identity in \cite{L-M-R} and the proof is complete.
\end{proof}
Now we consider some interesting quantities that relate to norm inequalities.
For bounded set of operators $A=(\mathscr{A}_t)_{t\in\Omega}$
we see that  the radius of the smallest disk that
essentially contains its range is
$$r_\infty(A)=\inf_{A\in \mathcal{B(H)}}ess \sup_{t\in\Omega}\| A_t-A\|=
\inf_{A\in \mathcal{B(H)}}\| A_t-A\|_\infty=\min_{A\in \mathcal{B(H)}}\| A_t-A\|_\infty.$$
From the triangle inequality we have
$\bigl|\|\mathscr{A}_t-A'\|-\|\mathscr{A}_t-A\|\bigr|\leq\|A'-A\|$,
so the mapping $A\to ess \sup_{t\in\Omega}\|A_t-A\|$ is nonnegative
and continuous on $\mathcal{B(H)}$. Since $(\mathscr{A}_t)_{t\in\Omega}$ is
bounded field of operators, we also have $\| A_{t}-A\|\to\infty$
when $\|A\|\to\infty$, so this mapping attains minimum \cite{Con}, and it
actually  attains at some $A_0\in \mathcal{B(H)}$, which represents a center of
the disk considered \cite{Kry}. Any such field of operators is of finite
diameter, therefore, we have that
$r_\infty(A)=ess \sup_{s,t\in\Omega}\| A_s-A_t\|,$ with the simple
inequalities given as $r_\infty(A)\le diam_\infty(A)\le 2r_\infty(A)$
relating those quantities. For such
 fields of operators we can now state the following stronger
version of Gr\"uss inequality whose proof is found in \cite{joc09i}.
\begin{lemma}
\label{th0} Let  $\mu$ be a $\sigma$-finite measure on $\Omega$ and
 let $A=(\mathscr{A}_t)_{t\in\Omega}$ and $B=(\mathscr{B}_t)_{t\in\Omega}$
be $[\mu]$ a.e. bounded  fields of operators. Then for all
$X\in \mathcal{C_{|\|.|\|}(H)}$,
$
\sup_{\mu(\delta)>0}\||\frac1{\mu(\delta)}\int_\delta\mathscr{A}_tX\mathscr{B}_t d\mu(t) -
\frac1{\mu(\delta)}\int_\delta\mathscr{A}_t d\mu(t) \,X
\frac1{\mu(\delta)}\int_\delta\mathscr{B}_t d\mu(t) |\|\le \min_{i} \mathcal{P_{i}}\cdot\|| X\||.
\label{oshtrina0}$
(i.e. $\sup$ is taken over all measurable sets
$\delta\subseteq\Omega$ such that $0<\mu(\delta)<\infty$).
\end{lemma}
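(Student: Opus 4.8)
The plan is to establish the bound for each fixed measurable $\delta$ with $0<\mu(\delta)<\infty$ and then take the supremum. First I would restrict to $\delta$ and normalize, replacing $\mu|_\delta$ by the probability measure $\mu_\delta=\frac1{\mu(\delta)}\mu|_\delta$. Since $A=(\mathscr{A}_t)$ and $B=(\mathscr{B}_t)$ are $[\mu]$ a.e.\ bounded and $\mu(\delta)<\infty$, the restricted fields lie in $L^2(\delta,\mu_\delta,\mathcal{B(H)})$, so every Gel'fand integral in the statement exists, and the Korkine-type identity \eqref{21}, being a purely Fubini-type rearrangement, applies verbatim to $\mu_\delta$ without any normality or commutativity assumption. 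This rewrites the left-hand difference as
$$\frac1{\mu(\delta)}\int_\delta\mathscr{A}_tX\mathscr{B}_t\,d\mu(t)-\frac1{\mu(\delta)}\int_\delta\mathscr{A}_t\,d\mu(t)\,X\,\frac1{\mu(\delta)}\int_\delta\mathscr{B}_t\,d\mu(t)=\frac12\int_{\delta^2}(\mathscr{A}_s-\mathscr{A}_t)\,X\,(\mathscr{B}_s-\mathscr{B}_t)\,d(\mu_\delta\times\mu_\delta)(s,t),$$
which is the structural heart of the argument, since it isolates the oscillation of the two fields about their averages.

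Next I would apply the unitarily invariant norm to both sides and move it under the integral sign, using the contractivity of Gel'fand integration (exactly as in Proposition \ref{novadefinicionalema}) to obtain the integral triangle inequality, and then the ideal estimate $|\|CXD|\|\le\|C\|\,|\|X|\|\,\|D\|$ from Section 2 to peel the operator-norm factors off $X$. This leaves the scalar bound $\frac12\int_{\delta^2}\|\mathscr{A}_s-\mathscr{A}_t\|\,\|\mathscr{B}_s-\mathscr{B}_t\|\,d(\mu_\delta\times\mu_\delta)\cdot|\|X|\|$. I would then dominate the difference norms by the geometric quantities introduced just before the lemma: directly, $\|\mathscr{A}_s-\mathscr{A}_t\|\le diam_\infty(A)$ for $\mu_\delta\times\mu_\delta$-a.e.\ $(s,t)$; or, inserting a center operator $A_0$ realizing the infimum in $r_\infty(A)$, $\|\mathscr{A}_s-\mathscr{A}_t\|\le 2r_\infty(A)$, and similarly for $B$. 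The resulting admissible product constants $\tfrac12 diam_\infty(A)\,diam_\infty(B)$, $2r_\infty(A)r_\infty(B)$, and their mixed combinations are precisely the candidates $\mathcal{P}_i$, so selecting the smallest yields the factor $\min_i\mathcal{P}_i$.

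Since each of these bounds is independent of $\delta$ — the normalizing factors $\mu(\delta)$ cancel once $\mu_\delta$ is used — passing to the supremum over all $\delta$ with $0<\mu(\delta)<\infty$ leaves the right-hand side unchanged, and the inequality follows. The chain of norm estimates is routine; the parts requiring genuine care are, first, the justification that the essential-supremum bounds $\|\mathscr{A}_s-\mathscr{A}_t\|\le diam_\infty(A)$ hold on a full-measure subset of the product $\delta^2$ simultaneously, and that the center operators realizing $r_\infty$ actually exist — both resting on the attainment-of-minimum argument sketched immediately before the statement — and, second, the bookkeeping that identifies which $\mathcal{P}_i$ is sharpest and thereby justifies the minimum. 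I expect this last identification, rather than any single estimate, to be the main obstacle.
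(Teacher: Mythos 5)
You should know at the outset that the paper itself does not prove Lemma \ref{th0} --- it defers to \cite{joc09i} --- and it never defines the quantities $\mathcal{P}_i$, so your attempt has to be calibrated against how the lemma is used downstream. Your architecture (normalize to $\mu_\delta$, the Korkine identity \eqref{21}, the triangle inequality under the Gel'fand integral, then $|\|CXD\||\le\|C\|\,|\|X\||\,\|D\|$) is sound as far as it goes, but it only yields the candidates $\frac12\,\mathrm{diam}_\infty(A)\,\mathrm{diam}_\infty(B)$ and, via $\mathrm{diam}_\infty\le 2r_\infty$, the constant $2\,r_\infty(A)\,r_\infty(B)$. The candidate the paper actually needs, $r_\infty(A)\,r_\infty(B)$ with constant $1$, is unreachable by this route, and this is a genuine gap: Theorem \ref{th1} is deduced from Lemma \ref{th0} using only $r_\infty(A)\le\frac{\|D-C\|}{2}$ and $r_\infty(B)\le\frac{\|F-E\|}{2}$ to obtain $\frac14\|D-C\|\cdot\|F-E\|$, whereas each of your constants gives at best $\frac12\|D-C\|\cdot\|F-E\|$ in that setting (since $\mathrm{diam}_\infty(A)\le\|D-C\|$ there). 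The paper moreover observes that the constant $\frac14$ is sharp, because it reproduces the scalar Gr\"uss inequality \eqref{grisovaca}; hence the factor $2$ you lose is not bookkeeping, and --- contrary to your closing guess --- the obstacle is not identifying which $\mathcal{P}_i$ is smallest but producing the sharp one at all. The loss occurs at a specific step: scalarizing the double integral by the integral triangle inequality discards the cancellation that the sharp bound exploits.

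The repair is to replace that scalarization by an operator-level Cauchy--Schwarz argument. For the probability measure $\mu_\delta$, center at the means: with $\bar A=\int_\delta\mathscr{A}_t\,d\mu_\delta(t)$ and $\bar B=\int_\delta\mathscr{B}_t\,d\mu_\delta(t)$, the cross terms vanish and one has the identity $\int_\delta\mathscr{A}_tX\mathscr{B}_t\,d\mu_\delta-\bar AX\bar B=\int_\delta(\mathscr{A}_t-\bar A)X(\mathscr{B}_t-\bar B)\,d\mu_\delta$. Now apply the Cauchy--Schwarz inequality for i.p.t.i.\ transformers in the operator-norm-peeled form (cf.\ the displayed Cauchy--Schwarz inequality of Section 4 and Theorems 3.2--3.3 of \cite{JOC}, which in this form needs no normality or commutativity), giving the bound $\bigl\|\int_\delta|(\mathscr{A}_t-\bar A)^*|^2\,d\mu_\delta\bigr\|^{1/2}\bigl\|\int_\delta|\mathscr{B}_t-\bar B|^2\,d\mu_\delta\bigr\|^{1/2}\,|\|X\||$. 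Finally invoke Proposition \ref{optlema}: the mean minimizes the quadratic deviation, so for the attained center $A_0$ of the field, $\int_\delta|\mathscr{A}_t-\bar A|^2\,d\mu_\delta\le\int_\delta|\mathscr{A}_t-A_0|^2\,d\mu_\delta\le r_\infty^2(A)\,I$, and likewise for $B$ and for the adjoint field; note the bound is uniform in $\delta$ even though $\bar A,\bar B$ are not. This yields $r_\infty(A)\,r_\infty(B)\,|\|X\||$ with no factor of $2$. In fact you may even keep your Korkine identity: combining it with the same Cauchy--Schwarz step and the variance identity \eqref{23} (rather than the scalar triangle inequality) recovers both $r_\infty(A)\,r_\infty(B)$ and the companion candidate $\frac12\,\mathrm{diam}_\infty(A)\,\mathrm{diam}_\infty(B)$, whose minimum is the intended $\min_i\mathcal{P}_i$.
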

Lemma \ref{th0} has an immediate implication as seen in the next theorem when
$(\mathscr{A}_t)_{t\in\Omega}$ and $(\mathscr{B}_t)_{t\in\Omega}$
are bounded fields of self-adjoint
operators.
\begin{theorem}
\label{th1} If $\mu$ is a probability measure on $\Omega$, let
$C,D,E,F$ be bounded self-adjoint operators and let
$(\mathscr{A}_t)_{t\in\Omega}$ and $(\mathscr{B}_t)_{t\in\Omega}$ be
bounded self-adjoint fields satisfying $C\le\mathscr{A}_t\le D$ and
$E\le\mathscr{B}_t\le F$ for all $t\in\Omega$. Then for all
$X\in \mathcal{C_{|\|.|\|}(H)}$,
\begin{equation}
\left\|\left|\int_\Omega\mathscr{A}_tX\mathscr{B}_t d\mu(t)-
\int_\Omega\mathscr{A}_td\mu(t) \,X \int_\Omega\mathscr{B}_t d\mu(t)\right|\right\|
\le\dfrac{\|D-C\|\cdot\|F-E\|}4\cdot \|| X|\|. \label{oshtrina}
\end{equation}
\end{theorem}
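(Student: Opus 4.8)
The plan is to obtain Theorem~\ref{th1} as an immediate specialization of the sharp Gr\"uss estimate in Lemma~\ref{th0}. Since $\mu$ is a probability measure, $\mu(\Omega)=1$, so I would first take the single admissible set $\delta=\Omega$ inside the supremum appearing in Lemma~\ref{th0}. Each normalizing factor $\tfrac1{\mu(\delta)}$ then equals $1$, and the expression under the supremum collapses to exactly the operator whose norm stands on the left of \eqref{oshtrina}. Because the supremum over all admissible $\delta$ dominates this one term, one immediately gets
$$\left\|\left|\int_\Omega\mathscr{A}_tX\mathscr{B}_t\,d\mu(t)-\int_\Omega\mathscr{A}_t\,d\mu(t)\,X\int_\Omega\mathscr{B}_t\,d\mu(t)\right|\right\|\le\min_i\mathcal{P}_i\cdot\||X|\|,$$
so the whole matter reduces to estimating the scalar $\min_i\mathcal{P}_i$ under the order hypotheses $C\le\mathscr{A}_t\le D$ and $E\le\mathscr{B}_t\le F$.

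The central step is to convert these order bounds into bounds on the geometric radius $r_\infty$ introduced just before Lemma~\ref{th0}. Choosing the natural center $A_0=\tfrac{C+D}2$, the hypothesis $C\le\mathscr{A}_t\le D$ rewrites as $-\tfrac{D-C}2\le\mathscr{A}_t-A_0\le\tfrac{D-C}2$ for every $t$. Since $\tfrac{D-C}2\ge0$ is self-adjoint with $\bigl\|\tfrac{D-C}2\bigr\|=\tfrac{\|D-C\|}2$, the elementary fact that $-K\le S\le K$ (with $S=S^{*}$ and $0\le K$) forces $\|S\|\le\|K\|$ yields $\|\mathscr{A}_t-A_0\|\le\tfrac{\|D-C\|}2$ for a.e.\ $t$. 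Passing to the essential supremum and using $r_\infty(A)=\inf_{A\in\mathcal{B(H)}}\operatorname{ess\,sup}_{t}\|\mathscr{A}_t-A\|$ gives $r_\infty(A)\le\tfrac{\|D-C\|}2$, and the identical argument with center $\tfrac{E+F}2$ gives $r_\infty(B)\le\tfrac{\|F-E\|}2$.

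It remains to feed these into the index set defining $\min_i\mathcal{P}_i$. The product of radii $r_\infty(A)\,r_\infty(B)$ is one of the competing bounds $\mathcal{P}_i$, so
$$\min_i\mathcal{P}_i\le r_\infty(A)\,r_\infty(B)\le\frac{\|D-C\|}2\cdot\frac{\|F-E\|}2=\frac{\|D-C\|\cdot\|F-E\|}4,$$
and combining this with the displayed consequence of Lemma~\ref{th0} delivers \eqref{oshtrina}. Equivalently, one may invoke the diameter bound $\tfrac14\,\mathrm{diam}_\infty(A)\,\mathrm{diam}_\infty(B)$ together with $\mathrm{diam}_\infty(A)=\operatorname{ess\,sup}_{s,t}\|\mathscr{A}_s-\mathscr{A}_t\|\le\|D-C\|$, which follows from $-(D-C)\le\mathscr{A}_s-\mathscr{A}_t\le D-C$. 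The only delicate point is precisely this transition from the operator order inequalities to the operator-norm radius estimate; once the midpoint centering and the spectral bound $\|S\|\le\|K\|$ are in hand, the rest is bookkeeping with the constants already recorded before Lemma~\ref{th0}.
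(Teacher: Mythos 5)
Your proposal is correct and follows essentially the same route as the paper: you center the fields at the midpoints $\tfrac{C+D}2$ and $\tfrac{E+F}2$, convert the order hypotheses into the radius bounds $r_\infty(A)\le\tfrac{\|D-C\|}2$ and $r_\infty(B)\le\tfrac{\|F-E\|}2$ (the paper does this via the quadratic-form identity $\|S\|=\sup_{\|f\|=1}|\langle Sf,f\rangle|$ for self-adjoint $S$, which is your spectral bound $\|S\|\le\|K\|$ in disguise), and then invoke Lemma~\ref{th0}. Your explicit specialization of the supremum to $\delta=\Omega$ and the identification of $r_\infty(A)\,r_\infty(B)$ among the quantities $\mathcal{P}_i$ merely spell out steps the paper leaves implicit.
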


\begin{proof}
t
As $\frac{C-D}2\le\mathscr{A}_t-\frac{C+D}2\le\frac{D-C}2$ for every
$t\in\Omega$, then
\begin{eqnarray*}
ess \sup_{t\in\Omega}\| \mathscr{A}_t-\frac{C+D}2\|&=&
ess \sup_{t\in\Omega}\sup_{\|
f\|=1}\||\langle\mathscr{A}_t-\frac{C+D}2 \| f,f\rangle|\\
&\le& \sup_{\| f\|=1}|\langle\frac{D-C}2 f,f\rangle|= \frac{\|
D-C\|}2,
\end{eqnarray*}  
which implies
 $r_\infty(A)\le
\frac{\| D-C\|}2,$ and          similarly

$r_\infty(B)\le\frac{\| F-E\|}2.$ Thus, \eqref{oshtrina} follows
directly from
    (\ref{oshtrina0}).
\end{proof}
 In case of $\mathcal{H}=\mathbb{C}$  and $\mu$ being the  normalized
Lebesgue measure on $[a,b]$ (i.e. $d\,\mu(t)=\frac{dt}{b-a}$), then
(\ref{grisovaca}) comes as an obvious corollary of  Theorem
\ref{th1}. This special case also confirms the sharpness of the
constant $\frac14$ in the inequality (\ref{oshtrina}).

Lastly, we consider, the  Gr\"uss type inequality for elementary operators in the example below.
 \begin{example}
 Let $A_1,
 \hdots, A_n$, $B_1, \hdots, B_n$, $C, D, E$ and $F$ be bounded linear self-adjoint operators
 acting on a Hilbert space $\mathcal{H}$
 such that
 $C\le A_i\le D$ and
$E\le B_i\le F$ for all $i=1,2,\cdots,n$ then for arbitrary
$X\in\mathcal{C}_{\||.|\|}\mathcal{H}$,
\begin{eqnarray}\nonumber
\||  \frac1n\sum_{i=1}^n A_i XB_i-\frac1{n^2}\sum_{i=1}^nA_i\,
X\sum_{i=1}^nB_i\|| \leq \frac{\|D-C\|\|F-E\|}4  \|| X\||.
\end{eqnarray}
\end{example}

Indeed, it is sufficient to prove that the elementary operator is normally represented and that Gr\"uss type inequality
holds for it in which case is provided in \cite{Oke1}.

In the next section we dedicate our effort to the applications of this study to other fields.
 We consider quantum theory in particular whereby we describe the application in  quantum
 chemistry and  quantum mechanics.
\section{Applications in quantum theory}
Norm inequalities and other properties of i.p.t.i transformers have various applications
in other fields. We discuss the applications in  quantum theory involving two cases \cite{Oke1}.
The first case is in quantum chemistry whereby we consider the Hamiltonian which is
 a bounded, self-adjoint operator on
some infinite-dimensional Hilbert space which governs a quantum chemical system.
 The Hamiltonian helps in estimation of ground state energies of chemical systems via subsystems.

The second case in quantum mechanics deals with commutator approximation.
The discussions of approximation by commutators $AX-XA$ or by generalized
commutator $AX-XB$ originates from  quantum
theory. For instance, the Heisenberg uncertainly principle may be mathematically deduced as
 saying that there exists a pair $A,X$ of linear operators and a non-zero
scalar $\alpha$ for which
$AX - XA = \alpha I$.  A natural question immediately arises:
How close can $AX - XA$ be to the identity?
In \cite{Oke1}, it is discussed that if $A$ is normal, then, for all $X \in B(H)$,
$||I - (AX - XA)|| \geq ||I||.$ In the   inequality here,  the zero commutator is a
 commutator approximant in $B(H)$.
\section*{Acknowledgement}The  author's appreciations go to TWAS-DFG for the financial support Grant No. 1603991000.

\end{document}